\documentclass[a4paper]{article}
\usepackage{mathptmx}
\usepackage{amscd,amssymb,amsmath,amsthm}

\textwidth=134mm
\oddsidemargin=12mm

\sloppy
\binoppenalty=10000
\relpenalty=10000

\makeatletter

\renewcommand{\@seccntformat}[1]
{{\csname the#1\endcsname}.\hspace{0.3em}}

\renewcommand{\section}{\@startsection
{section}
{1}
{0mm}
{-1.5\baselineskip}
{\baselineskip}
{\bfseries\normalsize}}

\renewcommand{\subsection}{\@startsection
{subsection}
{2}
{0mm}
{-\baselineskip}
{0.5\baselineskip}
{\normalsize\itshape}}

\renewcommand{\subsubsection}{\@startsection
{subsubsection}
{3}
{0mm}
{-.5\baselineskip}
{-2mm}
{\normalsize\itshape}}

\makeatother

\theoremstyle{plain}
\newtheorem*{theorem*}{Theorem}
\newtheorem{theorem}{Theorem}[section]
\newtheorem{lemma}{Lemma}[section]
\newtheorem{corollary}[theorem]{Corollary}
\newtheorem{prop}[lemma]{Proposition}

\newtheorem*{corollary*}{Corollary}

\newtheorem{oquest}{Open Question}

\theoremstyle{definition}
\newtheorem*{defin*}{Definition}
\newtheorem{defin}{Definition}[section]

\theoremstyle{remark}

\newtheorem{example}[lemma]{Example}
\newtheorem*{quest*}{Question}

\DeclareMathAlphabet{\matheur}{U}{eur}{m}{n}
\DeclareMathAlphabet{\matheus}{U}{eus}{m}{n}
\DeclareMathAlphabet{\matheuf}{U}{euf}{m}{n}

\numberwithin{equation}{section}


\newcommand{\abs}[1]{\left\lvert#1\right\rvert}

\DeclareMathOperator{\dist}{dist}
\DeclareMathOperator{\card}{card}
\DeclareMathOperator{\grad}{grad}

\DeclareMathOperator{\Cut}{Cut}
\DeclareMathOperator{\Gl}{Gl}
\DeclareMathOperator{\SU}{SU}
\DeclareMathOperator{\Gr}{Gr}
\DeclareMathOperator{\Ann}{Ann}


\begin{document}

\author{Gerasim  Kokarev
\\ {\small\it School of Mathematics, The University of Leeds}
\\ {\small\it Leeds, LS2 9JT, United Kingdom}
\\ {\small\it Email: {\tt G.Kokarev@leeds.ac.uk}}
}

\title{Bounds for Laplace eigenvalues of K\"ahler metrics}
\date{}
\maketitle

\begin{abstract}
\noindent
We prove inequalities for Laplace eigenvalues of K\"ahler manifolds generalising to higher eigenvalues the classical inequality for the first Laplace eigenvalue due to Bourguignon, Li, and Yau in 1994. We also obtain similar eigenvalue inequalities for analytic varieties in K\"ahler manifolds.

\end{abstract}

\medskip
\noindent
{\small
{\bf Mathematics Subject Classification (2010)}: 58J50, 35P15, 53C55 

\noindent
{\bf Keywords}: Laplace eigenvalues, K\"ahler metric, projective manifold, analytic variety}

%
%
%


\section{Statements and discussion of results}
\label{intro}

\subsection{Introduction}
Let $(\Sigma^n,g,J)$ be a closed K\"ahler manifold of complex dimension $n\geqslant 1$, and $\omega_g$ be its K\"ahler form. By $\Delta_g$ we denote the Laplace-Beltrami operator acting on functions on $(\Sigma^n,g)$. In 1994 Bourguignon, Li, and Yau~\cite{BLY} proved the following inequality for the first non-zero Laplace eigenvalue $\lambda_1(\Sigma^n,g)$ for projective manifolds $\Sigma^n$.
\begin{theorem}
\label{t:bly}
Let $(\Sigma^n,J)$ be an $n$-dimensional closed complex manifold that admits a holomorphic immersion $\phi:\Sigma^n\to\mathbb CP^m$. Suppose that $\Sigma^n$ is full in the sense that the image $\phi(\Sigma^n)$ is not contained in any hyperplane of $\mathbb CP^m$. Then for any K\"ahler metric $g$ on $\Sigma^n$ the first non-zero Laplace eigenvalue $\lambda_1(\Sigma^n,g)$ satisfies the inequality
\begin{equation}
\label{ineq:bly}
\lambda_1(\Sigma^n,g)\leqslant 4n\frac{m+1}{m}\left(\int_{\Sigma^n}\phi^*(\omega_{FS})\wedge\omega_g^{n-1}\right)/\left(\int_{\Sigma^n}\omega_g^n\right),
\end{equation}
where $\omega_{FS}$ is the Fubini-Study form on $\mathbb CP^m$, and $\omega_g$ is the K\"ahler form of $g$. 
\end{theorem}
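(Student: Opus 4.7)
The plan is to run the classical Rayleigh--Ritz argument with test functions built from the tautological embedding $u\colon\mathbb C P^m\hookrightarrow H_{m+1}(\mathbb C)$, $[z]\mapsto zz^{*}/|z|^{2}$, of projective space into the space of Hermitian matrices, combined with a Hersch-type centre-of-mass normalization. For any local holomorphic lift $\tilde\phi=(\phi_0,\ldots,\phi_m)$ of $\phi$, the matrix-valued map
\[
U(x)\;=\;\frac{\tilde\phi(x)\tilde\phi(x)^{*}}{|\tilde\phi(x)|^{2}}\;=\;u\circ\phi\,(x)
\]
is globally defined on $\Sigma^n$ and takes values in the trace-one rank-one Hermitian projectors.

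First I would exploit the $PGL(m+1,\mathbb C)$-action on $\mathbb C P^m$ to replace $\phi$ by $T\circ\phi$ for a suitable projective transformation $T$ so that the centre-of-mass identity
\[
\int_{\Sigma^n}U\,\omega_g^n \;=\; \frac{1}{m+1}\biggl(\int_{\Sigma^n}\omega_g^n\biggr)I_{m+1}
\]
holds. The existence of such $T$ is the Hersch--Li--Yau normalization lemma, proved by a degree argument on the symmetric space $PGL(m+1,\mathbb C)/PU(m+1)$; the fullness hypothesis on $\phi(\Sigma^n)$ is precisely what prevents the relevant barycentre map from degenerating. Crucially, $T^{*}\omega_{FS}$ lies in the same de~Rham class as $\omega_{FS}$, so this substitution leaves $\int_{\Sigma^n}\phi^{*}\omega_{FS}\wedge\omega_g^{n-1}$ unchanged.

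Next, I fix an orthonormal basis $\{A_s\}$ of the space of traceless Hermitian $(m+1)\times(m+1)$ matrices with respect to $\langle A,B\rangle=\operatorname{tr}(AB)$ and take $f_s(x)=\operatorname{tr}(A_sU(x))$ as trial functions. The previous step gives $\int_\Sigma f_s\,d\mu_g=0$, so the variational characterization of $\lambda_1$, summed over $s$, yields
\[
\lambda_1(\Sigma^n,g)\int_{\Sigma^n}\sum_s f_s^{2}\,d\mu_g \;\le\; \int_{\Sigma^n}\sum_s|\nabla f_s|^{2}\,d\mu_g.
\]
Parseval's identity for the orthonormal basis $\{A_s\}$ gives $\sum_s f_s^{2}=\operatorname{tr}(U^{2})-(m+1)^{-1}=m/(m+1)$ at every point, since $U$ is a trace-one projector; and $\sum_s|\nabla f_s|^{2}=|\nabla U|^{2}_{HS}$, the trace part of $U$ being constant.

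The final ingredient is a Kähler identity. Since $u$ is $U(m+1)$-equivariant and the isotropy acts irreducibly on tangent spaces, Schur's lemma implies that $|du|^2_{HS}$ is a constant multiple of $g_{FS}$, and a direct computation at $[1{:}0{:}\cdots{:}0]$ pins down the constant. Combined with the pointwise formula $\alpha\wedge\omega_g^{n-1}=n^{-1}\operatorname{tr}_g(\alpha)\,\omega_g^n$ for any real $(1,1)$-form $\alpha$, applied to $\alpha=\phi^{*}\omega_{FS}$, this yields an identity of the form
\[
\int_{\Sigma^n}|\nabla U|^{2}_{HS}\,d\mu_g \;=\; 4n\int_{\Sigma^n}\phi^{*}\omega_{FS}\wedge\omega_g^{n-1},
\]
and assembling the two displays produces \eqref{ineq:bly}. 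I expect the main obstacle to be the Hersch--Li--Yau centre-of-mass step: one must establish the existence of $T$ (where fullness is essential, since otherwise the barycentre can escape to the boundary of $\mathbb C P^m$ in its convex hull) and check cohomological invariance of the right-hand side under $PGL(m+1,\mathbb C)$.
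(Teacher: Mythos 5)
This statement (Theorem~\ref{t:bly}) is not proved in the paper; it is the classical result of Bourguignon, Li, and Yau, quoted from~\cite{BLY} as motivation. There is therefore no ``paper's own proof'' to compare against. Your proposal, on its own merits, reproduces the standard BLY argument correctly in outline: embed $\mathbb{CP}^m$ into Hermitian matrices via the rank-one projector $U$, normalize by a projective transformation so that the entries of $U\circ\phi$ become admissible first-eigenvalue test functions (the Hersch--Li--Yau balancing, where fullness is exactly what is needed), use Parseval for the traceless orthonormal basis to get the constant $m/(m+1)$ in the denominator, and convert the Dirichlet energy of the projector into the homological quantity $\int\phi^{*}\omega_{FS}\wedge\omega_g^{n-1}$ via the pointwise K\"ahler identity $n\,\alpha\wedge\omega_g^{n-1}=(\alpha,\omega_g)\,\omega_g^{n}$. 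Your last ingredient is essentially the paper's Lemma~\ref{denom:top}, which is phrased in terms of the $\mathfrak{su}_{m+1}$-valued moment map $\tau=iU$ and gives the same relation; the invariance of $\int\phi^{*}\omega_{FS}\wedge\omega_g^{n-1}$ under the $PGL(m+1,\mathbb{C})$-action is also noted in Section~\ref{prem} via the connectedness of the biholomorphism group. One point you should pin down carefully is the numerical constant: the identity you write as $\int|\nabla U|^{2}_{HS}\,d\mu_g = 4n\int\phi^{*}\omega_{FS}\wedge\omega_g^{n-1}$ depends on how $|\nabla\varphi|^{2}$ is normalized for complex $\varphi$, on the factorials $\omega_g^{n}/n!$ in the volume form, and on the normalization of $\omega_{FS}$ (here, diameter $\pi/2$); a sanity check against $\phi=\operatorname{id}\colon\mathbb{CP}^{1}\to\mathbb{CP}^{1}$, where $\lambda_1(g_{FS})=8$ and the bound is sharp, is the right way to fix it. It is also worth noting that your route is quite different from the one the paper actually uses to prove its new Theorem~\ref{t1}: there no balancing argument is invoked; instead one works with the push-forward measure on $(\mathbb{CP}^m,\dist_{FS})$, uses the Grigoryan--Netrusov--Yau annulus decomposition (Proposition~\ref{ds1}), and builds test functions from the flow $\theta_{t,[W]}$ and the model function $\varphi_{[W]}$. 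The Hersch method yields the sharp constant for $\lambda_1$ but does not extend to $\lambda_k$, whereas the covering method gives bounds for all $k$ at the cost of an unspecified constant and does not require the immersion or fullness hypotheses.
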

Above we assume that the Fubini-Study form $\omega_{FS}$ is normalised so that the diameter of $\mathbb CP^m$ equals $\pi/2$, see Section~\ref{prem} for the details on the notation used. The quotient of the integrals on the right hand-side of inequality~\eqref{ineq:bly}, that is
\begin{equation}
\label{def:deg}
d([\phi],[\omega_g]):=\left(\int_{\Sigma^n}\phi^*(\omega_{FS})\wedge\omega_g^{n-1}\right)/\left(\int_{\Sigma^n}\omega_g^n\right),
\end{equation}
is a homological invariant, called the {\em holomorphic degree}. It depends only on the cohomology class $[\omega_g]$ and the action of $\phi$ on $2$-cohomology $\phi^*:H^2(\mathbb CP^m,\mathbb Q)\to H^2(\Sigma^n,\mathbb Q)$, and is strictly positive when $\phi:\Sigma^n\to\mathbb CP^n$ is non-constant. Consider the set $\mathcal K_\Omega(\Sigma^n,J)$ of K\"ahler metrics on $(\Sigma^n,J)$ whose K\"ahler forms represent a given de Rham cohomology class $\Omega\in H^2(\Sigma^n,\mathbb Q)$. Then Theorem~\ref{t:bly} says that the first Laplace eigenvalue $\lambda_1(\Sigma^n,g)$ on a projective manifold $\Sigma^n$ is  bounded as the metric $g$ ranges in $\mathcal K_\Omega(\Sigma^n,J)$. Since metrics $g\in\mathcal K_\Omega(\Sigma^n,J)$ have the same volume, this statement actually gives a bound for the scale invariant quantity 
$$
\Lambda_1(g)=\lambda_1(\Sigma^n,g)\mathit{Vol}_g(\Sigma^n)^{1/n},
$$ 
where $n$ is the complex dimension. The restriction to a class of metrics $\mathcal K_\Omega(\Sigma^n, J)$ is necessary for such a bound to hold. Indeed, by the results of Colbois and Dodziuk~\cite{CD94}, see also Lohkamp~\cite{Loh}, the quantity $\Lambda_1(g)$ is unbounded when $n>1$ and $g$ ranges over all Riemannian metrics. Theorem~\ref{t:bly} also implies that the Fubini-Study metric on the projective space $\mathbb CP^m$ maximises the first Laplace eigenvalue in its K\"ahler class. This result has been generalised by Arezzo, Ghigi, and Loi~\cite{AGL} to the setting of K\"ahler manifolds that admit holomorphic stable vector bundles over $M$ with sufficiently many sections, under an appropriate stability condition. In particular, they show that the symmetric K\"ahler-Einstein metrics on the Grassmannian spaces also maximize the first Laplace eigenvalue in their K\"ahler classes. Moreover, as is shown in~\cite{BG13}, so do symmetric K\"ahler-Einstein metrics on Hermitian symmetric spaces of compact type. Related to this circle of questions extremal eigenvalue problems have been considered in~\cite{AJK}.

The purpose of this paper is to prove inequalities analogous to Theorem~\ref{t:bly} for higher Laplace eigenvalues, answering the question raised by Yau~\cite{Yau96}. We also obtain inequalities for higher Laplace eigenvalues on analytic subvarieties in K\"ahler manifolds.

\subsection{Bounds for higher Laplace eigenvalues}
For a Riemannian metric $g$ on a closed manifold $\Sigma^n$, we denote by
$$
0=\lambda_0(\Sigma^n,g)<\lambda_1(\Sigma^n,g)\leqslant\lambda_2(\Sigma^n,g)\leqslant\ldots\leqslant\lambda_k(\Sigma^n,g)\leqslant\ldots
$$
the eigenvalues of the Laplace-Beltrami operator $\Delta_g$, repeated with respect to multiplicity. Our main result is the following version of Theorem~\ref{t:bly} for all Laplace eigenvalues.
\begin{theorem}
\label{t1}
Let $(\Sigma^n,J)$ be an $n$-dimensional closed complex manifold, and let $\phi:\Sigma^n\to\mathbb CP^m$ be a non-constant holomorphic map. Then for any K\"ahler metric $g$ on $\Sigma^n$ its Laplace eigenvalues satisfy the following inequalities
\begin{equation}
\label{t1:ineq}
\lambda_k(\Sigma^n,g)\leqslant C(n,m)d([\phi],[\omega_g])k\qquad\text{for any}\quad k\geqslant 1,
\end{equation}
where $C(n,m)>0$ is a constant that depends on the dimensions $n$ and $m$ only, and $d([\phi],[\omega_g])$ is the holomorphic degree defined by relation~\eqref{def:deg}.
\end{theorem}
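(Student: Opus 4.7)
My plan is to exhibit $k+1$ smooth functions on $\Sigma^n$ with pairwise disjoint supports whose Rayleigh quotients are each bounded by $\Lambda:=C(n,m)\,d([\phi],[\omega_g])\,k$, and then to conclude by the standard min--max characterization of $\lambda_k$ (the Courant-type version, in which disjoint supports replace orthogonality).

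The key computational input is a localised version of the Bourguignon--Li--Yau computation. For any holomorphic $\phi\colon\Sigma^n\to\mathbb{CP}^m$ and any smooth $f\colon\mathbb{CP}^m\to\mathbb{R}$, the K\"ahler structure yields a pointwise identity that integrates to a bound of the form
\begin{equation*}
\int_{\Sigma^n}\lvert\nabla(f\circ\phi)\rvert_g^{2}\,dv_g
\leqslant C_n\int_{\Sigma^n}\bigl(\lvert\nabla f\rvert_{FS}^{2}\circ\phi\bigr)\,\phi^{*}\omega_{FS}\wedge\omega_g^{n-1};
\end{equation*}
here the $(1,1)$-form $\phi^{*}\omega_{FS}\wedge\omega_g^{n-1}$ plays the role of an ``energy measure'' on $\Sigma^n$ whose total mass is a universal multiple of $d([\phi],[\omega_g])\cdot\mathit{Vol}_g(\Sigma^n)$. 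This reduces the Dirichlet-energy estimate to an estimate on the fixed target $\mathbb{CP}^m$.

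To obtain the $k+1$ disjoint supports I would apply a Grigor'yan--Netrusov--Yau style partition theorem to the target metric-measure space $(\mathbb{CP}^m,\omega_{FS},\mu)$, with the auxiliary measure $\mu:=\phi_{*}(\omega_g^{n}/n!)$; the non-triviality of $\phi$ ensures that $\mu$ is non-atomic and has total mass $\mathit{Vol}_g(\Sigma^n)$. The output is $k+1$ pairwise disjoint nested pairs $V_i\subset V_i^{*}\subset\mathbb{CP}^m$ together with cut-offs $\chi_i\colon\mathbb{CP}^m\to[0,1]$, supported in $V_i^{*}$ and equal to $1$ on $V_i$, arranged so that $\mu(V_i)\geqslant c\,\mathit{Vol}_g(\Sigma^n)/k$. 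Pulling back, the functions $u_i:=\chi_i\circ\phi$ are then disjointly supported on $\Sigma^n$.

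The denominator of the Rayleigh quotient of $u_i$ is bounded below by $\mu(V_i)$, while the numerator is controlled, via the displayed inequality applied to $\chi_i$, by the product of $\sup_{V_i^{*}}\lvert\nabla\chi_i\rvert_{FS}^{2}$ with the energy-measure mass of $\phi^{-1}(V_i^{*})$. The principal technical obstacle is the calibration of the partition scales against the cut-off gradients, so that this product, divided by $\mu(V_i)$, yields the linear-in-$k$ dependence stated in the theorem; this is exactly where the quantitative form of the Grigor'yan--Netrusov--Yau decomposition is essential, and where the constant $C(n,m)$ acquires its dependence on both dimensions. A projective ``centre of mass'' adjustment in $\mathrm{PGL}(m+1,\mathbb C)$ may also enter, to fix a gauge in which the zero-mode of $\Delta_g$ does not spoil the disjoint-support version of the min--max principle.
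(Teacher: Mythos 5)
Your framework is the right one---pushing forward the volume measure to $\mathbb{CP}^m$, applying a Grigor'yan--Netrusov--Yau decomposition to produce disjoint annuli, and pulling back test functions---and the localised energy inequality you display is essentially the content of Lemma~\ref{denom:top}. But you correctly flag, and then leave unresolved, the central difficulty: a generic Lipschitz cut-off $\chi_i$ adapted to an annulus of width $\rho$ has $\sup\lvert\nabla\chi_i\rvert_{FS}^2\sim\rho^{-2}$, while the GNY decomposition gives no control at all on how much of the energy measure $\phi_*\bigl(\phi^*\omega_{FS}\wedge\omega_g^{n-1}\bigr)$ lands on the shell $V_i^*\setminus V_i$ at scale $\rho$. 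When $n=1$ this pushforward is $\deg(\phi)$ times the Fubini--Study area, so the Dirichlet integral of each cut-off is scale-invariant and Korevaar's argument closes; for $n>1$ it is not, and the product of $\sup\lvert\nabla\chi_i\rvert^2$ with the energy mass of $V_i^*$ can blow up as the annuli shrink. The projective centre-of-mass normalisation you mention is the Bourguignon--Li--Yau device for orthogonalising the moment map against constants and does nothing to address this calibration.

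The paper removes the calibration problem by abandoning distance-based cut-offs altogether. The test function associated to an annulus centred at $[W]$ is built from the single moment-map coordinate $\varphi_{[W]}$ precomposed with a rescaling biholomorphism $\theta_{t,[W]}$, which is the gradient flow of $\varphi_{[W]}$ (Lemma~\ref{t1:l1}); the scale $t$ is chosen via Lemma~\ref{t1:l2} so that support and lower bound match the annulus (Lemmas~\ref{t1:l3} and~\ref{t1:l4}). The crucial point is that $\lvert\nabla(\varphi_{[W]}\circ\theta\circ\phi)\rvert^2\leqslant\lvert\nabla(\tau\circ\theta\circ\phi)\rvert^2$ for the full moment map $\tau$, and by Lemma~\ref{denom:top}
$$
\int_{\Sigma^n}\lvert\nabla(\tau\circ\theta\circ\phi)\rvert^2\,d\mathit{Vol}_g=\frac{1}{(n-1)!}\int_{\Sigma^n}(\theta\circ\phi)^*\omega_{FS}\wedge\omega_g^{n-1}=\frac{1}{(n-1)!}\int_{\Sigma^n}\phi^*\omega_{FS}\wedge\omega_g^{n-1},
$$
the last equality because a biholomorphism $\theta$ acts trivially on $H^2(\mathbb{CP}^m)$. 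The Dirichlet energy is thus a cohomological quantity, \emph{independent of the annulus}, and no balancing of scales against gradients is required. Without such a replacement for ordinary cut-offs your proposal has a genuine gap at exactly the point you single out.
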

To our knowledge, Theorem~\ref{t1} is the first rigorous result in the literature that gives bounds for higher Laplace eigenvalues of K\"ahler metrics in a fixed K\"ahler class. Note that unlike in Theorem~\ref{t:bly},  we do not assume that a holomorphic map $\phi:\Sigma^n\to\mathbb CP^m$ is an immersion and do not impose any hypotheses on the image $\phi(\Sigma^n)$ in Theorem~\ref{t1}. In complex dimension one our theorem implies a celebrated result of Korevaar~\cite{Kor}: for any Hermitian metric $g$ on a complex curve $\Sigma^1$ the Laplace eigenvalues satisfy the inequalities
\begin{equation}
\label{ineq:kor}
\lambda_k(\Sigma^1,g)\mathit{Vol}_g(\Sigma^1)\leqslant C_*\deg(\phi)k\qquad\text{for any}\quad k\geqslant 1,
\end{equation}
where $\phi:\Sigma^1\to\mathbb CP^1$ is an arbitrary non-constant holomorphic map, and $C_*$ is a universal constant. Indeed, by the change of variables in integral formula we obtain
\begin{equation}
\label{d:deg:1}
d([\phi],[\omega_g])=\deg(\phi)\left(\mathit{Vol}(\mathbb CP^1)/\mathit{Vol}_g(\Sigma^1)\right)
\end{equation}
for an arbitrary non-constant holomorphic map $\phi:\Sigma^1\to\mathbb CP^1$. Now Korevaar's inequalities~\eqref{ineq:kor} follow directly from Theorem~\ref{t1}. As is known~\cite{GH}, for any complex curve $\Sigma^1$ there exists a non-constant holomorphic map $\phi:\Sigma^1\to\mathbb CP^1$ whose degree is not greater than $\gamma+1$, where $\gamma$ is the genus of $\Sigma^1$, and inequalities~\eqref{ineq:kor} imply the bounds
$$
\lambda_k(\Sigma^1,g)\mathit{Vol}_g(\Sigma^1)\leqslant C_*(\gamma+1)k\qquad\text{for any}\quad k\geqslant 1,
$$ 
for an arbitrary Riemannian metric on $\Sigma^1$. Theorem~\ref{t1} can be viewed as a natural generalisation to higher dimensional K\"ahler manifolds of Korevaar's result, and in particular, answers the question on the existence of bounds in K\"ahler classes for higher eigenvalues, raised by Yau in~\cite[p.~170]{Yau96}.

For fibrations $\phi:\Sigma^n\to\mathbb CP^1$ Theorem~\ref{t1} yields a version for higher Laplace eigenvalues of the inequality for the first Laplace eigenvalue by Li and Yau in 1982, see~\cite[Theorem~3]{LiYau82}. Related questions have been also discussed by Gromov in~\cite{Gro}. In this case the quantity $d([\phi],[\omega_g])$ takes a form similar to~\eqref{d:deg:1}: up to a constant it is the ratio $\deg(\phi)/\mathit{Vol}_g(\Sigma^n)$, where $\deg(\phi)$ is understood as the volume of the generic fiber of $\phi$. By considering fibrations over complex projective spaces, Theorem~\ref{t1} can be used to obtain bounds for all Laplace eigenvalues on not necessarily projective manifolds. For instance, all K\"ahler surfaces of algebraic dimension one are non-projective and elliptic, see~\cite{BPV}. In particular, they admit non-constant holomorphic maps to $\mathbb CP^1$, and hence, satisfy the hypotheses of Theorem~\ref{t1}. The examples include certain $K3$ surfaces and certain complex $2$-tori. Thus, in many instances we have a positive answer to the following outstanding question.
\begin{oquest}
Let $(\Sigma^n,J)$ be a closed K\"ahler manifold. Are Laplace eigenvalues bounded in every K\"ahler class $K_\Omega(\Sigma^n,J)$ of K\"ahler metrics?
\end{oquest}

Note that when the complex dimension $n>1$, the inequality in Theorem~\ref{t1} is not compatible with the Weyl asymptotic law 
$$
\lambda_k(\Sigma^n,g)\mathit{Vol}_g(\Sigma^n)^{1/n}\sim C(n)k^{1/n}\qquad\text{as~ }k\to +\infty,
$$
in the sense that the index $k$ occurs in it with the "wrong" power. However, inequality~\eqref{t1:ineq} can not be improved to the inequality where $k$ is replaced by $k^{1/n}$ on the right hand-side in~\eqref{t1:ineq}. For otherwise, passing to the limit, the Weyl law would imply the bound $\mathit{Vol}_g(\Sigma^n)^{-1/n}\leqslant C(n)d$, where $d$ is the holomorphic degree, which can not hold. To see the latter consider a fibration $\phi:\mathbb CP^1\times\Sigma_0^{n-1}\to\mathbb CP^1$ that forgets the second factor. Equipping it with the product metric $g_{FS}\oplus g_0$, we conclude that the holomorphic degree $d$ does not depend on a metric $g_0$ on $\Sigma_0^{n-1}$, and arrive at a contradiction with the hypothetical bound. The above discussion leads to the following question. 
\begin{oquest}
Are there bounds for Laplace eigenvalues in K\"ahler classes that are compatible with the asymptotic eigenvalue behaviour?
\end{oquest}

A few words about the proof of Theorem~\ref{t1}.  The main argument uses ingredients originating from the work of Korevaar~\cite{Kor}, and developed further by Grigoryan, Netrusov, and Yau in~\cite{GNY}. The novelty of our approach is an improved construction of test-functions that allows us to obtain eigenvalue bounds in K\"ahler classes of metrics in terms of the holomorphic degree only. The main idea is motivated by the construction used in~\cite{Ko17}. We describe it in Section~\ref{proof:t1}.

\subsection{Examples and further discussion}
Theorem~\ref{t1} applies to many homogeneous K\"ahler manifolds, and shows that all Laplace eigenvalues are bounded in K\"ahler classes on them. For example, complex Grassmannians are holomorphically and isometrically embedded into the projective spaces by the standard Pl\"ucker embedding. In more detail, for a finite-dimensional complex vector-space $W$ we denote by $\Gr (r,W)$ the Grassmannian of $r$-dimensional subspaces in $W$. The {\em Pl\"ucker embedding} $\Gr (r,W)\to \mathbb P(\wedge^rW)$ is defined by 
\begin{equation}
\label{def:plucker}
\Gr (r,W)\ni L\longmapsto [e_1\wedge\ldots\wedge e_r]\in\mathbb P(\wedge^r W),
\end{equation}
where $e_1,\ldots,e_r$ is a basis in an $r$-dimensional subspace $L\subset W$. Other examples include irreducible Hermitian symmetric spaces of compact type; by~\cite{Ca} they can be holomorphically and isometrically embedded into $(\mathbb CP^m, cg_{FS})$ for some integer $m>0$ and real number $c>0$. 

Holomorphic maps into projective spaces often occur via the so-called {\em Kodaira maps}. In more detail, let $E$ be a holomorphic vector bundle of rank $r$ over a closed K\"ahler manifold $\Sigma^n$. Let $V=H^0(E)$ be a space of global holomorphic sections of $E$, and let $N$ be its dimension. Suppose that $E$ is globally generated, and for $p\in\Sigma^n$ denote by $V_p\subset V$ the subspace formed by sections that vanish at $p$; its dimension equals $N-r$. The {\em Kodaira map} $\kappa_E:\Sigma^n\to\Gr (r,V^*)$ is defined by sending $p\mapsto\Ann (V_p)$, where $\Ann(V_p)$ is the annihilator subspace of $V_p$,
$$
\Ann(V_p)=\left\{\lambda\in V^*: \lambda\equiv 0\text{ on }V_p\right\}.
$$
Composing it with the Pl\"ucker embedding~\eqref{def:plucker}, we obtain an embedding $K_E:\Sigma^n\to\mathbb CP^{m}$, where $(m+1)$ is the binomial coefficient $\binom{N}{r}$. Besides, as is known, see~\cite{AGL,GH}, the pull-back of the Fubini-Study form $\omega_{FS}$ on $\mathbb CP^m$ represents a multiple of the first Chern class $c_1(E)$, that is $K^*_E([\omega_{FS}])=c\cdot c_1(E)$, where $c>0$ is a constant that depends on normalisation conventions only. Thus, we arrive at the following consequence of Theorem~\ref{t1}.
\begin{corollary}
\label{cor:agl}
Let $E$ be a holomorphic globally generated vector bundle over a closed K\"ahler manifold $(\Sigma^n,J)$. Then for any K\"ahler metric $g$ on $\Sigma^n$ its Laplace eigenvalues satisfy the following inequalities
$$
\lambda_k(\Sigma^n,g)\leqslant C(n,r,N)\left(\left(\int_{\Sigma^n}c_1(E)\cup[\omega_g]^{n-1}\right)/\left(\int_{\Sigma^n}[\omega_g]^n\right)\right)k
$$
for any $k\geqslant 1$, where the constant $C(n,r,N)$ depends on the dimension $n$ of $\Sigma^n$, rank $r$ of $E$, and $N=\dim H^0(E)$ only.
\end{corollary}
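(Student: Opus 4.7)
The plan is to apply Theorem~\ref{t1} directly to the holomorphic map $K_E:\Sigma^n\to\mathbb{CP}^m$ constructed in the paragraph preceding the corollary, and then to rewrite the resulting holomorphic degree in terms of the first Chern class $c_1(E)$. First I would set $\phi:=K_E$ and $m:=\binom{N}{r}-1$. Because $E$ is globally generated, for each $p\in\Sigma^n$ the evaluation map $V\to E_p$ is surjective, so the subspace $V_p\subset V$ has codimension exactly $r$ and the Kodaira map $\kappa_E:\Sigma^n\to\Gr(r,V^*)$ is defined on all of $\Sigma^n$; being the composition of $\kappa_E$ with the holomorphic Pl\"ucker embedding, the map $K_E$ is itself holomorphic. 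Assuming $K_E$ is non-trivial, Theorem~\ref{t1} then gives
$$
\lambda_k(\Sigma^n,g)\leqslant C(n,m)\,d([K_E],[\omega_g])\,k\qquad\text{for every }k\geqslant 1.
$$

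Next I would substitute for the holomorphic degree using the identity $K_E^*[\omega_{FS}]=c\cdot c_1(E)$ in $H^2(\Sigma^n,\mathbb{R})$, recalled from \cite{AGL,GH}, where $c>0$ depends only on the normalisation of $\omega_{FS}$. Inserting this relation into the definition~\eqref{def:deg} of the holomorphic degree yields
$$
d([K_E],[\omega_g])=c\cdot\left(\int_{\Sigma^n}c_1(E)\cup[\omega_g]^{n-1}\right)\bigg/\left(\int_{\Sigma^n}[\omega_g]^n\right).
$$
Combining the two displays and setting $C(n,r,N):=c\cdot C(n,\binom{N}{r}-1)$ gives the stated inequality, since $m=\binom{N}{r}-1$ is a function of $r$ and $N$ alone and $c$ is an absolute constant.

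I do not anticipate a substantive obstacle: the argument is essentially bookkeeping once Theorem~\ref{t1} is in hand, and the only non-formal input, namely the identity $K_E^*[\omega_{FS}]=c\cdot c_1(E)$, is a classical computation that I would simply cite. One minor caveat is the degenerate case in which $K_E$ is a constant map, equivalently $c_1(E)=0$ in real cohomology; Theorem~\ref{t1} requires $\phi$ to be non-trivial, and in this degenerate situation the right-hand side of the corollary vanishes while the eigenvalues $\lambda_k(\Sigma^n,g)$ for $k\geqslant 1$ do not, so the statement is to be read as implicitly excluding this case, which is the standard convention when invoking the Kodaira construction.
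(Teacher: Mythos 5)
Your proposal matches the paper's own (implicit) derivation: the corollary is stated immediately after the paragraph that constructs the Kodaira map $K_E:\Sigma^n\to\mathbb{CP}^m$ with $m+1=\binom{N}{r}$ and records the identity $K_E^*([\omega_{FS}])=c\cdot c_1(E)$, and the intended proof is precisely to apply Theorem~\ref{t1} to $\phi=K_E$ and substitute that identity into the definition~\eqref{def:deg} of the holomorphic degree, exactly as you do. Your caveat about the degenerate case where $K_E$ is constant is also a correct reading of the implicit non-triviality hypothesis inherited from Theorem~\ref{t1}.
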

The hypotheses of Corollary~\ref{cor:agl} are close to the setting considered by Arezzo, Ghigi, and Loi in~\cite{AGL}, where the authors obtain bounds for the first Laplace eigenvalue. However, unlike the main result in~\cite{AGL}, we are not concerned with the value of the constant $C(n,r,N)$ and do not require any assumption on the stability of the Gieseker point of $E$ in Corollary~\ref{cor:agl}.

We end with a discussion of the version of Theorem~\ref{t1} for analytic subvarieties in K\"ahler manifolds. Let $M^{n+l}$ be a closed $(n+l)$-dimensional K\"ahler manifold, and let $\Sigma^n\subset M^{n+l}$ be an irreducible analytic subvariety whose regular locus $\Sigma_*^n$, that is the complement of the singular set, has complex dimension $n$. Any K\"ahler metric $g$ on $M^{n+l}$ induces an incomplete K\"ahler metric $g_\Sigma$ on $\Sigma^n_*$. We consider the Laplace operator defined on compactly supported $C^2$-smooth  functions on  $\Sigma^n_*$. In Section~\ref{variety} we explain that this operator is essentially self-adjoint and has discrete spectrum. The following statement gives bounds for Laplace eigenvalues of $\Sigma^n$ that are uniform over K\"ahler metrics in a fixed K\"ahler class on $M^{n+l}$.

\begin{theorem}
\label{t2}
Let $(M^{n+l},J)$ be an $(n+l)$-dimensional closed complex manifold, and let $\phi:M^{n+l}\to\mathbb CP^m$ be a holomorphic map. Let $\Sigma^n\subset M^{n+l}$ be an irreducible analytic subvariety such that the map $\phi$ is non-constant on the regular locus $\Sigma^n_*$. Then for any K\"ahler metric $g$ on $M^{n+l}$ the Laplace eigenvalues of $(\Sigma^n,g_{\Sigma})$ satisfy the following inequalities
$$
\lambda_k(\Sigma^n,g_\Sigma)\leqslant C(n,m)\left(\left(\int_{\Sigma^n}\phi^*(\omega_{FS})\wedge\omega_g^{n-1}\right)/\left(\int_{\Sigma^n}\omega_g^n\right)\right)k
$$
for any $k\geqslant 1$, where $C(n,m)$ is the constant that depends on $n$ and $m$ only, and $\omega_g$ is the K\"ahler form of $g$ on $M^{n+l}$.
\end{theorem}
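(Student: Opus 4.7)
The plan is to reproduce the strategy of Theorem~\ref{t1} intrinsically on the variety $\Sigma^n$, viewed as a metric measure space, and then argue that the singular locus $\Sigma^n\setminus\Sigma^n_*$ can be excised from test functions at negligible cost. By the essential self-adjointness of $\Delta_{g_\Sigma}$ explained in Section~\ref{variety}, the eigenvalues admit the standard min-max characterization on the form domain of $\Delta_{g_\Sigma}$. Hence the task reduces to producing $k+1$ Lipschitz functions $f_0,\dots,f_k$ on $\Sigma^n_*$ with pairwise disjoint supports, each with Rayleigh quotient
\begin{equation*}
R(f_i)=\frac{\int_{\Sigma^n_*}\abs{\nabla f_i}_{g_\Sigma}^2\,\omega_{g_\Sigma}^n}{\int_{\Sigma^n_*}f_i^2\,\omega_{g_\Sigma}^n}
\end{equation*}
bounded by $C(n,m)\,d_\Sigma\,k$, where $d_\Sigma$ is the holomorphic degree of $\phi$ on $\Sigma^n$ with respect to $[\omega_{g_\Sigma}]$.

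I would then apply the Korevaar-Grigoryan-Netrusov-Yau decomposition to the metric measure space $(\Sigma^n,g_\Sigma,\mu)$, where $\mu=\omega_{g_\Sigma}^n/n!$ is extended by zero to the singular locus (which carries no $\mu$-mass, since $\Sigma^n\setminus\Sigma^n_*$ has real codimension at least two). This yields at least $k+1$ disjoint ``annular'' pieces, each supporting a comparable share of $\mu$-mass together with a comparable share of the secondary measure $\nu=\phi^*(\omega_{FS})\wedge\omega_{g_\Sigma}^{n-1}$. On each piece I would construct a test function built from local K\"ahler potentials of $\phi^*\omega_{FS}$; the pointwise inequality relating $\abs{\nabla u}_{g_\Sigma}^2\,\omega_{g_\Sigma}^n$ to $i\partial\bar\partial u\wedge\omega_{g_\Sigma}^{n-1}$ for plurisubharmonic $u$ converts the packing count into the holomorphic degree factor, exactly as in the proof of Theorem~\ref{t1}.

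The final step is to legalise these test functions, which a priori live on $\Sigma^n$ and need not vanish near $\Sigma^n_{\mathrm{sing}}$. Since $\Sigma^n_{\mathrm{sing}}$ is a proper analytic subvariety, it has real codimension at least two, and hence $W^{1,2}$-capacity zero in the induced metric. Multiplying each $f_i$ by a Lipschitz cut-off supported in $\Sigma^n_*$ away from a shrinking tubular neighborhood of $\Sigma^n_{\mathrm{sing}}$ then perturbs the Rayleigh quotient arbitrarily little and produces genuine elements of the form domain of $\Delta_{g_\Sigma}$ realising the required bound.

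The principal obstacle I anticipate is carrying out the GNY-type decomposition on $(\Sigma^n,g_\Sigma,\mu)$, since $g_\Sigma$ is incomplete and potentially degenerate transversally to $\Sigma^n_{\mathrm{sing}}$. One must check that the intrinsic balls and $\mu$ satisfy the mild covering hypotheses of the decomposition lemma, either directly on $\Sigma^n$, via a resolution of singularities, or by comparing with the ambient Riemannian geometry of $M^{n+l}$ and pushing balls back to $\Sigma^n$. The zero-capacity of the singular set should guarantee that any defect contributed by the singularities is of lower order, so that the remaining variational deduction parallels the proof of Theorem~\ref{t1} essentially verbatim.
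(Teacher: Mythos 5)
Your proposal diverges from the paper's argument at the decisive step, and the divergence creates real gaps. The paper does \emph{not} run the Grigoryan--Netrusov--Yau decomposition on the metric measure space $(\Sigma^n,g_\Sigma,\mu)$ itself; it pushes the volume measure of $\Sigma^n_*$ forward under $\phi$ to a finite non-atomic measure on $(\mathbb{CP}^m,\dist_{FS})$, and runs the decomposition \emph{there}. This is the whole point: the covering property of Proposition~\ref{ds1} is checked once and for all on $\mathbb{CP}^m$ via Bishop--Gromov comparison, and the issue you flag as your ``principal obstacle'' --- namely that $g_\Sigma$ is incomplete and one cannot easily verify an $N$-covering property intrinsically on a singular variety --- never arises. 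You would have to build such a covering theory on $\Sigma^n$ (via resolution, extrinsic comparison, or some Ahlfors-type regularity argument), and none of these is straightforward; the paper's change of viewpoint is precisely the device that makes this unnecessary.

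A second, sharper gap: you assert that the GNY decomposition will produce annular pieces carrying ``a comparable share of the secondary measure $\nu=\phi^*(\omega_{FS})\wedge\omega_{g_\Sigma}^{n-1}$'' as well as of $\mu$. Proposition~\ref{ds1} controls only one measure at a time; there is no simultaneous two-measure version available, and the argument you sketch would be overdetermined. In the paper this issue does not exist because the Dirichlet energy of each test function $u_{A_i}\circ\phi$ is estimated \emph{globally} over $\Sigma^n$, not over each annulus, using the explicit moment-map test functions $\psi_{R,[W]}$ and $\bar\psi_{r,[W]}$ together with Lemma~\ref{denom:top}; the bound $\int_{\Sigma^n}\phi^*(\omega_{FS})\wedge\omega_g^{n-1}$ appears without any localization of $\nu$. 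Your proposed construction ``from local K\"ahler potentials of $\phi^*\omega_{FS}$'' is also different from what the paper does and would need to be made precise. Finally, the capacity cut-off near $\Sigma^n_{\mathrm{sing}}$ is unnecessary in the paper's route: the test functions are pullbacks of Lipschitz functions on $\mathbb{CP}^m$, hence bounded Lipschitz on $\Sigma^n_*$ with finite volume, hence already in $W^{1,2}(\Sigma^n)$; the variational principle then applies because $W^{1,2}_0(\Sigma^n_*)$ is dense in $W^{1,2}(\Sigma^n)$ by the Li--Tian results quoted in Section~\ref{variety}. Your capacity argument is not wrong in spirit, but it is extra machinery patching over a problem the paper avoids altogether.
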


In particular, when the Hodge number $h^{1,1}(M^{n+l})$ equals one, Theorem~\ref{t2} gives eigenvalue bounds that are also uniform over both subvarietes $\Sigma^n$ of $M^{n+l}$ and all K\"ahler metrics $g$ on $M^{n+l}$ of unit volume. To our knowledge this statement is new even for algebraic varieties.


\section{Preliminaries and notation}
\label{prem}

\subsection{Geometry of the complex projective space}
Let $\mathbb CP^m$ be a complex projective space equipped with the Fubini-Study metric $g_{FS}$. We assume that the Fubini-Study metric is normalised such that the diameter of $\mathbb CP^m$ equals $\pi/2$. Viewing $\mathbb CP^m$ as the collection of $1$-dimensional subspaces in $\mathbb C^{m+1}$, this convention means that the pull-back $\pi^*\omega_{FS}$ of the corresponding K\"ahler form $\omega_{FS}$ satisfies the relation
$$
\pi^*\omega_{FS}=\frac{i}{2}\partial\bar\partial\log\abs{Z}^2,\qquad\text{where}\quad\abs{Z}^2=\sum\limits_{\ell=0}^m\abs{z_\ell}^2,
$$
and $\pi:\mathbb C^{m+1}\backslash\{0\}\to\mathbb CP^m$ is a natural projection. Recall that the distance function $\dist_{FS}$ corresponding to the Fubini-Study metric satisfies the following relation
\begin{equation}
\label{rel:key1}
\cos (\dist_{FS}([Z],[W]))=\frac{\abs{\langle Z,W\rangle}}{\abs{Z}\abs{W}},
\end{equation}
where $Z$, $W\in\mathbb C^{m+1}\backslash\{0\}$, and the brackets $\langle\cdot,\cdot\rangle$ denote the standard Hermitian product on $\mathbb C^{m+1}$. In the sequel we denote by $\Cut_{[Z]}$ the cut locus of a point $[Z]\in\mathbb CP^m$. The analysis of geodesics in $\mathbb CP^m$ shows that it is a hyperplane formed by all $1$-dimensional subspaces $[W]$ orthogonal to $[Z]$. We refer to~\cite{AG,BGM}, where these and related facts are discussed in detail.

As is well-known~\cite{GH}, any biholomorphic map $\mathbb CP^m\to\mathbb CP^m$ has the form
\begin{equation}
\label{f:bihol}
[Z]\longmapsto [CZ],\qquad\text{where}\quad Z\in\mathbb C^{m+1}\backslash\{0\}, \text{ ~and~ } C\in\Gl_{m+1}(\mathbb C).
\end{equation}
In particular, we see that the group of biholomorphisms of $\mathbb CP^m$ is connected, and hence, any biholomorphism induces the identity map on cohomology.

Recall that the isometry group of $\mathbb CP^m$ with respect to the Fubini-Study metric is formed by biholomorphisms~\eqref{f:bihol} such that $C\in\SU_{m+1}$. Consider the moment map $\tau:\mathbb CP^m\to\mathfrak{su}_{m+1}^*$ for the action of the isometry group; it satisfies the relation
$$
d(\tau,X)=-\imath_{\xi_X}\omega_{FS},
$$
where $X\in\mathfrak{su}_{m+1}$ and $\xi_X$ is the fundamental vector field for the action on $\mathbb CP^m$. Identifying the dual space $\mathfrak{su}_{m+1}^*$ with the Lie algebra $\mathfrak{su}_{m+1}$ by means of the Killing scalar product 
$$
(X,Y)=\mathit{trace}(X^*Y)=-\mathit{trace}(XY),
$$
we may assume that $\tau$ takes values in $\mathfrak{su}_{m+1}$. Then, in standard homogeneous coordinates $[Z]=[z_0:z_1:\ldots:z_m]$ on $\mathbb CP^m$, it can be written as
\begin{equation}
\label{mm:f}
\tau([Z])={i}\frac{ZZ^*}{Z^*Z}={i}\left(\frac{z_i\bar z_j}{\abs{Z}^2}\right)_{0\leqslant i,j \leqslant m}\!\!\!\!\!\!\!\!\!\!\!\!\!\!\!\!\!\!\!\!\!\!\!\!\!\!\!,
\end{equation}
see details in~\cite{AGL,MDS98}. Note also that the moment map satisfies the following identity:
\begin{equation}
\label{rel:mommap}
\omega_{FS}=-\frac{i}{2}\sum\limits_{j,\ell=0}^{m}d\tau_{j\ell}\wedge d\tau_{\ell j},
\end{equation}
where $\tau_{j\ell}$ are entries of the matrix $\tau$. Due to the equivariance properties, it is sufficient to see that this relation holds at one point, for example at $[1:0:\ldots:0]$, where it can be verified in a straightforward manner.

The following statement is implicitly contained in~\cite{BLY,AGL}. We include a proof for the completeness of the exposition.
\begin{lemma}
\label{denom:top}
Let $(\Sigma^n,g,J)$ be a K\"ahler manifold, and let $\phi:\Sigma^n\to\mathbb CP^m$ be a holomorphic map. Then the gradient of the matrix-valued map $\tau\circ\phi:\mathbb CP^m\to\mathbb C^{(m+1)\times (m+1)}$ satisfies the relation
$$
\abs{\nabla(\tau\circ\phi)}^2\omega_g^n=n\phi^*(\omega_{FS})\wedge\omega_g^{n-1},
$$
where $\tau:\mathbb CP^m\to\mathfrak{su}_{m+1}$ is the moment map, and $\omega_g$ is the K\"ahler form of a metric $g$.
\end{lemma}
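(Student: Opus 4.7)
The plan is to reduce the identity to a pointwise check in a unitary holomorphic frame, using the trace formula~\eqref{rel:mommap} together with a standard K\"ahler wedge identity.

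First I would pull~\eqref{rel:mommap} back through $\phi$. Since $\omega_{FS}$ is of type $(1,1)$ and $\phi$ is holomorphic, the content of the trace formula that records $\omega_{FS}$ is its mixed $(1,1)$-part. Using $\phi^*\partial=\partial\phi^*$ and $\phi^*\bar\partial=\bar\partial\phi^*$, together with the anti-Hermitian relation $\tau_{\ell j}=-\overline{\tau_{j\ell}}$ to convert $\bar\partial(\tau_{\ell j}\circ\phi)=-\overline{\partial(\tau_{j\ell}\circ\phi)}$, one arrives at
$$
\phi^*\omega_{FS}=\tfrac{i}{2}\sum_{j,\ell}\partial v_{j\ell}\wedge\overline{\partial v_{j\ell}},\qquad v_{j\ell}:=\tau_{j\ell}\circ\phi.
$$

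Next I would verify the wedge identity pointwise. Fix $p\in\Sigma^n$ and choose a holomorphic coordinate system at $p$ in which $\omega_g|_p=\tfrac{i}{2}\sum_{\alpha=1}^{n}dz^\alpha\wedge d\bar z^\alpha$. Expanding $\partial v_{j\ell}=\sum_\alpha(v_{j\ell})_\alpha dz^\alpha$, only the diagonal $(\alpha,\alpha)$-terms survive a wedge with $\omega_g^{n-1}$, since any off-diagonal $dz^\alpha\wedge d\bar z^\beta$ with $\alpha\ne\beta$ produces a repeated differential when combined with the remaining factors of $\omega_g$. The elementary identity $\tfrac{i}{2}dz^\alpha\wedge d\bar z^\alpha\wedge\omega_g^{n-1}=\tfrac{1}{n}\omega_g^n$ then gives
$$
\phi^*\omega_{FS}\wedge\omega_g^{n-1}=\tfrac{1}{n}\sum_{j,\ell,\alpha}|(v_{j\ell})_\alpha|^2\,\omega_g^n,
$$
and by the convention used in the paper for the gradient of the matrix-valued map $\tau\circ\phi$, the right-hand sum equals $\tfrac{1}{n}|\nabla(\tau\circ\phi)|^2\omega_g^n$; multiplying through by $n$ yields the claim.

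The main subtlety is the bookkeeping: one must correctly identify which component of $d\tau_{j\ell}\wedge d\tau_{\ell j}$ survives in a top-degree computation, and use the anti-Hermitian symmetry of $\tau$ to recombine the mixed real/imaginary-part contributions into a genuine squared Hilbert--Schmidt norm. Once that identification is in place, the remainder of the argument is a straightforward linear-algebra calculation in a unitary frame, and the answer does not depend on the choice of frame by the $U(m+1)$-equivariance of the moment map.
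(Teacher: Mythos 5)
Your proposal is correct and takes essentially the same route as the paper: both proofs exploit the $\mathfrak{su}_{m+1}$-structure of the moment map together with holomorphicity of $\phi$ to rewrite $\phi^*\omega_{FS}$ as a sum of rank-one semi-positive $(1,1)$-forms, and both reduce the wedge with $\omega_g^{n-1}$ to the trace (either via the abstract identity $(\alpha,\omega_g)\,\omega_g^n=n\,\alpha\wedge\omega_g^{n-1}$, as the paper does, or pointwise in a unitary frame, as you do — these are the same identity). The only difference is presentational: the paper passes from $|\nabla\varphi_{j\ell}|^2$ to the form and sums, while you first simplify $\phi^*\omega_{FS}$ and then expand — but the antihermitian bookkeeping and the diagonal-extraction step are identical in substance.
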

\begin{proof}
Recall that for any real $(1,1)$-form $\alpha$ on $\Sigma^n$ the following relation holds
\begin{equation}
\label{in:rel1}
(\alpha,\omega_g)\omega_g^n=n\alpha\wedge\omega_g^{n-1},
\end{equation}
where $\omega_g$ is the K\"ahler form on $\Sigma^n$, and the brackets $(\cdot,\cdot)$ denote the induced Euclidean product on $(1,1)$-forms. Choosing
$$
\alpha=i(\partial\varphi\wedge\bar\partial \bar\varphi+\partial\bar\varphi\wedge\bar\partial\varphi)
$$
for a smooth $\mathbb C$-valued function $\varphi$ on $\Sigma^n$, from~\eqref{in:rel1} we obtain
\begin{equation}
\label{in:rel2}
\abs{\nabla\varphi}^2\omega_g^n=in(\partial\varphi\wedge\bar\partial \bar\varphi+\partial\bar\varphi\wedge\bar\partial\varphi)\wedge\omega_g^{n-1}.
\end{equation}
Now let $\varphi_{j\ell}$ be the entries of the matrix $\tau\circ\phi$. Then, using~\eqref{in:rel2}, we obtain
\begin{multline*}
\abs{\nabla\varphi_{j\ell}}^2\omega_g^n=in(\partial\varphi_{j\ell}\wedge\bar\partial \bar\varphi_{j\ell}+\partial\bar\varphi_{j\ell}\wedge\bar\partial\varphi_{j\ell})\wedge\omega_g^{n-1}=\\
in\phi^*(\partial\tau_{j\ell}\wedge\bar\partial \bar\tau_{j\ell}+\partial\bar\tau_{j\ell}\wedge\bar\partial\tau_{j\ell})\wedge\omega_g^{n-1}=-in\phi^*(\partial\tau_{j\ell}\wedge\bar\partial \tau_{\ell j}+\partial\tau_{\ell j}\wedge\bar\partial\tau_{j\ell})\wedge\omega_g^{n-1},
\end{multline*}
where in the second relation we used the hypothesis that $\phi$ is holomorphic, and in the last -- that the moment map $\tau$ takes values in $\mathfrak{su}_{m+1}$. Combining the above with relation~\eqref{rel:mommap}, we conclude that
$$
\abs{\nabla(\tau\circ\phi)}^2\omega_g^n=\sum\limits_{i,j=0}^m\abs{\nabla\varphi_{j\ell}}^2\omega_g^n=n\phi^*(\omega_{FS})\wedge\omega_g^{n-1}.
$$
Thus, the statement is demonstrated.
\end{proof}

\subsection{First eigenfunctions and holomorphic vector fields.}
A well-known result by Matsushima~\cite{Ma57} establishes a relationship between first Laplace eigenfunctions and holomorphic vector fields on K\"ahler-Einstein manifolds of positive scalar curvature. In more detail, for any first eigenfunction $f$ its gradient $\grad f$ is a holomorphic non-Killing vector field, see the discussion in~\cite{AJK}. In the sequel we will need an explicit description of the gradient flow of a particular eigenfunction on the complex projective space $\mathbb CP^m$.

Given  a $1$-dimensional subspace $[W]$ in $\mathbb C^{m+1}$ and a real number $t>0$ we consider a $\mathbb C$-linear operator $\Theta_{t,[W]}:\mathbb C^{m+1}\to\mathbb C^{m+1}$ defined by the following relation
$$
\Theta_{t,[W]}Z=\left\{
\begin{array}{ccc}
Z, & \text{ if } & Z\in [W],\\
tZ, & \text{ if } & \langle Z,W\rangle=0, 
\end{array}
\right.
$$
where the brackets $\langle\cdot,\cdot\rangle$ denote the standard Hermitian product on $\mathbb C^{m+1}$. By $\theta_{t,[W]}$ we denote the induced biholomorphism $\mathbb CP^m\to\mathbb CP^m$, given by
$$
\theta_{t,[W]}[Z]=[\Theta_{t,[W]}Z].
$$
It is clear that a point $[W]\in\mathbb CP^m$ as well as the points $[Z]$ corresponding to $1$-dimensional subspaces orthogonal to $[W]$ are fixed points of $\theta_{t,[W]}$ for any $t>0$. 

Now for a given $1$-dimensional subspace $[W]\in\mathbb CP^m$ we consider a function $\varphi_{[W]}:\mathbb CP^m\to\mathbb R$ defined as
\begin{equation}
\label{def:model}
\varphi_{[W]}([Z])=\frac{\abs{\langle Z,W\rangle}^2}{\abs{Z}^2\abs{W}^2},\qquad\text{where~ }[Z]\in\mathbb CP^m,
\end{equation}
and the brackets $\langle\cdot,\cdot\rangle$ denote the standard Hermitian product on $\mathbb C^{m+1}$. The maximum of $\varphi_{[W]}$ is achieved at the point $[W]$, and the minimum -- at the cut locus $\Cut_{[W]}$.  As is well-known~\cite{BGM,BLY}, the function $\varphi_{[W]}-(1/(m+1))$ is a first eigenfunction of the Laplace-Beltrami operator on $\mathbb CP^m$.

The following lemma describes a relationship between the function $\varphi_{[W]}$ and the family of biholomorphisms $\theta_{t,[W]}$. Its proof is a straightforward exercise, but we include the details for reader's convenience.
\begin{lemma}
\label{t1:l1}
For any $[W]\in\mathbb CP^m$ the family of biholomorphisms $\theta_{e^{-2\tau},[W]}$, where $\tau\in\mathbb R$, is the gradient flow of the function $\varphi_{[W]}$. 
\end{lemma}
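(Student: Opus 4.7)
The plan is to reduce the claim to an explicit coordinate computation at a single basepoint by exploiting the transitivity of the isometry group. For any $U\in\SU_{m+1}$ the biholomorphism $F_U\colon[Z]\mapsto[UZ]$ is an isometry of $(\mathbb{CP}^m,g_{FS})$, and a direct unwinding of the definitions shows that $\varphi_{[W]}=\varphi_{[UW]}\circ F_U$ and $F_U\circ\theta_{t,[W]}\circ F_U^{-1}=\theta_{t,[UW]}$ (the latter because $\Theta_{t,[UW]}=U\Theta_{t,[W]}U^{-1}$ as linear operators on $\mathbb{C}^{m+1}$). Since both the gradient and its flow are natural under isometries, it suffices to verify the statement for the single choice $[W]=[1:0:\ldots:0]$.

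In the affine chart where $[Z]=[1:w_1:\ldots:w_m]$, the function $\varphi_{[W]}$ simplifies to $\psi(w)=1/(1+|w|^2)$ and the biholomorphism $\theta_{t,[W]}$ becomes the dilation $w\mapsto tw$; hence the family $\theta_{e^{-2\tau},[W]}$ is the flow of the explicit real vector field
$$
V=-2\sum_{j=1}^{m}\bigl(w_j\partial_{w_j}+\bar w_j\partial_{\bar w_j}\bigr).
$$
The task then reduces to checking that $V=\grad\psi$ with respect to the Fubini-Study metric. For this I would use the K\"ahler potential $K(w)=\log(1+|w|^2)$, record the closed-form inverse $(\partial_j\partial_{\bar k}K)^{-1}=(1+|w|^2)(\delta_{jk}+\bar w_k w_j)$ (a one-line matrix verification), and combine it with the elementary identity $\partial_{\bar m}\psi=-w_m/(1+|w|^2)^2$; a short contraction then yields $\grad\psi=V$.

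The only delicate point is bookkeeping the normalization constants. The convention $\omega_{FS}=(i/2)\partial\bar\partial\log|Z|^2$ corresponding to diameter $\pi/2$ produces a factor of $2$ in the formula relating the Riemannian gradient of a real function $f$ to the K\"ahler potential data, and it is precisely this factor that yields the exponent $e^{-2\tau}$ rather than $e^{-\tau}$. As a reassuring conceptual check one may invoke Matsushima's theorem, quoted in the paper: since $\varphi_{[W]}-1/(m+1)$ is a first Laplace eigenfunction, $\grad\varphi_{[W]}$ is automatically a holomorphic vector field; its vanishing locus contains $\{[W]\}\cup\Cut_{[W]}$, which pins it down within a one-parameter family of holomorphic vector fields, matching the infinitesimal generator of $\theta_{e^{-2\tau},[W]}$ up to a single scalar that is fixed by evaluation at any one non-fixed point.
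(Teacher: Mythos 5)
Your argument is correct and follows the same route as the paper: reduce by $\SU_{m+1}$-equivariance to the basepoint $[W]=[1:0:\ldots:0]$, observe that in the affine chart $\theta_{e^{-2\tau},[W]}$ is the dilation flow with generator $-2\zeta$, and then verify the gradient identity by an explicit coordinate computation. The only cosmetic difference is that you check $\grad\psi=V$ via the closed-form inverse of the Fubini--Study Hessian, whereas the paper instead verifies the equivalent defining relation $\imath_{X^{1,0}}\omega_{FS}=i(df)^{0,1}$, which sidesteps the metric inverse; your Matsushima-based sanity check is a nice extra but not needed.
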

\begin{proof}
By the equivariance properties,
$$
\varphi_{[CW]}([CZ])=\varphi_{[W]}([Z])\quad\text{ and }\quad\theta_{t,[CW]}([CZ])=[C]\theta_{t,[W]}([Z]),
$$
where $[Z], [W]\in\mathbb CP^m$ and $C\in\SU_{m+1}$, it is sufficient to prove the statement of the lemma for one point $[W]\in\mathbb CP^m$, for example, when $[W]=[1:0:\ldots:0]$. For the rest of the proof we assume that $[W]$ is chosen in this way, and denote by $f$ the function $\varphi_{[W]}$. Since the point $[W]$ and its cut locus $\Cut_{[W]}$ are critical sets of $f$, and the vector field
\begin{equation}
\label{l1:def:vf}
X_{[Z]}=\left.\frac{d}{d\tau}\right|_{\tau=0}\!\!\!\!\!\theta_{e^{-2\tau},[W]}([Z])
\end{equation}
vanishes on these sets, it remains to verify the hypothesis that $X$ is the gradient of $f$ 
on the complement of $[W]\cup\Cut_{[W]}$ only. We denote the complement of $\Cut_{[W]}$ by $U_0$; it is formed by points $[Z]=[z_0:z_1:\ldots:z_m]$ such that $z_0\ne 0$. In the coordinate chart
$$
U_0\ni [z_0:z_1:\ldots:z_m]\longmapsto \left(\frac{z_1}{z_0},\ldots,\frac{z_m}{z_0}\right)\in\mathbb C^m
$$
the function $f=\varphi_{[W]}$ takes the form
\begin{equation}
\label{l1:func}
f(\zeta_1,\ldots,\zeta_m)=\left(1+\sum_{i=1}^m\abs{\zeta_i}^2\right)^{-1},
\end{equation}
and the biholomorphism $\theta_{t,[W]}$ acts as a dilation, $\zeta\mapsto t\zeta$. Thus, by relation~\eqref{l1:def:vf}, the vector field $X$ takes the form $X(\zeta)=-2\zeta$.
The hypothesis that $X$ is the gradient of $f$ is equivalent to the relation $\imath_X\omega_{FS}=Jdf$, where we assume that the complex structure $J$ acts on the $1$-form $df$ as $-df(J\cdot)$. The latter can be also re-written in the following form
\begin{equation}
\label{l1:ve:grad}
\imath_{X^{1,0}}\omega_{FS}={i}(df)^{0,1},
\end{equation}
where $X^{1,0}$ and $(df)^{0,1}$ stand for $(1,0)$- and $(0,1)$-parts of $X$ and $df$ respectively. Using the formula for the Fubini-Study metric in these coordinates,
$$
\omega_{FS}=\frac{i}{2}\partial\bar\partial\log\left(1+\sum_{i=1}^m\abs{\zeta_i}^2\right),
$$
and formula~\eqref{l1:func} for the function $f$, relation~\eqref{l1:ve:grad} can be verified in a straightforward fashion.
\end{proof}

By relation~\eqref{rel:key1} the metric balls $B_{[W]}(r)$ in $\mathbb CP^m$ with respect to the Fubini-Study metric are precisely the sets
\begin{equation}
\label{rel:mball}
B_{[W]}(r)=\{[Z]\in\mathbb CP^m: \varphi_{[W]}([Z])<\cos^2r\},
\end{equation}
where $r\in [0,\pi/2]$.
The next lemma is essentially a consequence of Lemma~\ref{t1:l1}, but we state it separately for the convenience of references.
\begin{lemma}
\label{t1:l2}
For any point $[W]\in\mathbb CP^m$ and any $t>0$ the biholomorphism $\theta_{t,[W]}:\mathbb CP^m\to\mathbb CP^m$ maps a metric ball $B_{[W]}(r)$ in the Fubini-Study metric, where $r\in (0,\pi/2)$, onto the metric ball $B_{[W]}(\rho)$ such that the radii are related as $t\tan r=\tan\rho$. 
\end{lemma}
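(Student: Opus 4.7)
The plan is to reduce the claim to an explicit computation in an affine chart centred at $[W]$, using tools already in place in the proof of Lemma~\ref{t1:l1}. By the equivariance
$\theta_{t,[CW]}([CZ]) = [C]\theta_{t,[W]}([Z])$ for $C\in\SU_{m+1}$ noted in that proof, together with the fact that $\SU_{m+1}$ acts by isometries of the Fubini-Study metric and transitively on $\mathbb{CP}^m$, we may assume without loss of generality that $[W]=[1:0:\ldots:0]$.

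Next, I would exploit the explicit formulas already derived in the proof of Lemma~\ref{t1:l1}. In the coordinate chart $(\zeta_1,\ldots,\zeta_m)=(z_1/z_0,\ldots,z_m/z_0)$ on the complement $U_0$ of $\Cut_{[W]}$, the function $\varphi_{[W]}$ takes the form
$$
\varphi_{[W]}(\zeta)=\left(1+\sum_{i=1}^m\abs{\zeta_i}^2\right)^{-1},
$$
and the biholomorphism $\theta_{t,[W]}$ acts as the dilation $\zeta\mapsto t\zeta$. Combined with the key identity \eqref{rel:key1}, which gives $\varphi_{[W]}([Z])=\cos^2(\dist_{FS}([Z],[W]))$, this yields the clean relation $\abs{\zeta}=\tan(\dist_{FS}([Z],[W]))$ for any $[Z]\in U_0$.

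From here the lemma falls out immediately. A point $[Z]\in B_{[W]}(r)$ with $r\in(0,\pi/2)$ lies in $U_0$ (since $\Cut_{[W]}$ is precisely the antipodal hyperplane at distance $\pi/2$), and its coordinates satisfy $\abs{\zeta}<\tan r$. The image under $\theta_{t,[W]}$ has coordinates $t\zeta$, whose modulus is at most $t\tan r$, so the image point is at Fubini-Study distance $\rho$ from $[W]$ satisfying $\tan\rho=t\tan r$; the same calculation with equality replaces the strict inequality on the bounding sphere. Invertibility of $\theta_{t,[W]}$ (with inverse $\theta_{1/t,[W]}$) ensures the mapping onto $B_{[W]}(\rho)$, completing the argument.

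There is no substantive obstacle here; the content of the lemma is essentially a bookkeeping observation once one recognises the dilation interpretation of $\theta_{t,[W]}$ in affine coordinates and the trigonometric identification of the Fubini-Study distance from $[W]$ with the arctangent of the Euclidean norm in those coordinates, both of which are already set up in the preceding material.
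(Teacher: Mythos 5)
Your proof is correct and follows essentially the same route as the paper's: reduce to $[W]=[1:0:\ldots:0]$, work in the affine chart where $\theta_{t,[W]}$ is the dilation $\zeta\mapsto t\zeta$ and $\abs{\zeta}=\tan(\dist_{FS}([Z],[W]))$, and read off $t\tan r=\tan\rho$. The only stylistic difference is that the paper first invokes the gradient-flow structure from Lemma~\ref{t1:l1} to conclude abstractly that $\theta_{t,[W]}$ sends balls centred at $[W]$ to concentric balls, and then computes the radius, whereas you obtain both the containment and surjectivity directly from the dilation formula together with the invertibility $\theta_{t,[W]}^{-1}=\theta_{1/t,[W]}$; both are sound.
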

\begin{proof}
Since $\theta_{e^{-2\tau},[W]}$ is the gradient flow of $\varphi_{[W]}$, it clearly maps the level sets of $\varphi_{[W]}$ into themselves preserving the natural order, given by the values of the function $\varphi_{[W]}$. The combination of these facts together with relation~\eqref{rel:mball} shows that a metric ball $B_{[W]}(r)$ in the Fubini-Study metric is mapped by $\theta_{t,[W]}$ onto a concentric metric ball $B_{[W]}(\rho)$. The relationship between the radii can be, for example, derived from the local representation~\eqref{l1:func} for the function $\varphi_{[W]}$. In more detail, assuming that $[W]=[1:0:\ldots:0]$ and using the notation in the proof of Lemma~\ref{t1:l1}, we see that if $\zeta\in\partial B_{[W]}(r)$, then the combination of~\eqref{rel:key1} and~\eqref{l1:func} yields the following relation:
\begin{equation}
\label{l2:eq}
\frac{1}{1+\abs{\zeta}^2}=\cos^2r\quad\Leftrightarrow\quad \abs{\zeta}^2=\tan^2r,
\end{equation}
where $\zeta\in\mathbb C^m$ and $\abs{\zeta}^2=\sum\abs{\zeta_i}^2$. Since in these coordinates the biholomorphism $\theta_{t,[W]}$ is the dilation $\zeta\mapsto t\zeta$, the radius $\rho$ of the image ball satisfies the relation
$$
\frac{1}{1+t^2\abs{\zeta}^2}=\cos^2\rho\quad\Leftrightarrow\quad t^2\abs{\zeta}^2=\tan^2\rho.
$$
Comparing the last relation with~\eqref{l2:eq}, we obtain $t^2\tan^2r=\tan^2\rho$.
\end{proof}


\section{Proof of Theorem~\ref{t1}}
\label{proof:t1}
\subsection{Geometry of  metric measure spaces}
The proofs of Theorems~\ref{t1} and~\ref{t2} are based on a statement that guarantees the existence of an arbitrary number of disjoint sets carrying a sufficient amount of volume. Below we give a brief account on it in the setting of metric measure spaces. By $(X, d)$ we denote a separable metric space; the ball $B_p(r)$ is a subset of the form $\{x\in X: d(x,p)<r\}$. We start with the following definition.
\begin{defin}
For an integer $N> 1$ a metric space $(X,d)$ is said to satisfy the {\em $N$-covering property}, if each ball $B_p(r)$ can be covered by $N$ balls of radius $r/2$.
\end{defin}

As the example below shows, complete Riemannian manifolds of non-negative Ricci curvature can be viewed as metric spaces that satisfy the $N$-covering property such that the value $N$ depends on the dimension of a manifold only.
\begin{example}[Spaces of non-negative Ricci curvature]
\label{nnrc}
Let $(M^\ell,h)$ be a complete Riemannian manifold of non-negative Ricci curvature, and $\dist(\cdot,\cdot)$ be its distance function. Recall that the value $\dist(p,q)$ is defined as the infimum of lengths of all smooth paths joining $p$ and $q$. We claim that the {\em metric space $(M^\ell,\dist)$ satisfies the $N$-covering property with $N=9^\ell$}. Indeed, recall that by the relative volume comparison theorem, see~\cite{Cha}, the volumes of concentric balls with respect to $\dist$, satisfy the relation
\begin{equation}
\label{rvc}
\frac{\mathit{Vol}(B_p(R))}{\mathit{Vol}(B_p(r))}\leqslant\left(\frac{R}{r}\right)^\ell,\qquad\text{where~ } 0<r\leqslant R.
\end{equation}
Now for a given ball $B_p(r)$ let $\mathcal C=\{B_{p_i}(r/4)\}$ be a maximal family of disjoint balls centred at $p_i\in B_p(r)$. Then the family $\{B_{p_i}(r/2)\}$ is a covering of $B_p(r)$. Thus, for a proof of our claim it is sufficient to estimate the cardinality of the covering $\mathcal C$, $m=\card\mathcal C$. Let $p_{i_0}$ be a point such that
$$
\mathit{Vol}(B_{p_{i_0}}(r/4))=\min\left\{\mathit{Vol}(B_{p_{i}}(r/4)): i=1,\ldots, m\right\}.
$$
Then we obtain
\begin{multline}
\label{aux:est}
m\mathit{Vol}(B_{p_{i_0}}(r/4))\leqslant \sum_{i=1}^m\mathit{Vol}(B_{p_{i}}(r/4))=\mathit{Vol}\left(\bigcup_iB_{p_{i}}(r/4)\right)\leqslant \\
\mathit{Vol}(B_{p}(5r/4))\leqslant \mathit{Vol}(B_{p_{i_0}}(9r/4)),
\end{multline}
where in the second inequality we used the fact that the centres $p_i$ of the balls belong to $B_p(r)$, and hence, the balls $B_{p_i}(r/4)$ lie in the ball $B_p(5r/4)$. In the last inequality in~\eqref{aux:est} we used the inclusion $B_p(5r/4)\subset B_{p_{i_0}}(9r/4)$. Now by inequality~\eqref{rvc}, we conclude
$$
m\leqslant\frac{\mathit{Vol}(B_{p_{i_0}}(9r/4))}{\mathit{Vol}(B_{p_{i_0}}(r/4))}\leqslant\left(\frac{9r/4}{r/4}\right)^\ell=9^\ell,
$$
and thus, the claim is demonstrated.
\end{example}

Developing the ideas of Korevaar~\cite{Kor}, Grigoryan, Netrusov, and Yau~\cite{GNY} showed that on certain metric spaces with such covering properties for any non-atomic finite measure one can always find a collection of disjoint sets carrying a controlled amount of measure. The geometry of such sets is also important. In general, they can not be chosen as metric balls, but can be chosen as the so-called annuli. By an annulus $A$ in $(X,d)$ we mean a subset of the following form
$$
\{x\in X: r\leqslant d(x,p)<R\},
$$
where $p\in X$ and $0\leqslant r<R<+\infty$. The real numbers $r$ and $R$ above are called the {\em inner} and {\em outer} radii respectively; the point $p$ is the centre of an annulus $A$. In addition, we denote by $2A$ the annulus
$$
\{x\in X:r/2\leqslant d(x,p)<2R\}.
$$
The following statement is the reformulation of~\cite[Corollary~3.2]{GNY}. It builds on the original results of Korevaar~\cite{Kor}, and this improvement is essential for our proof of Theorem~\ref{t1}.
\begin{prop}
\label{ds1}
Let $(X,d)$ be a separable metric space such that all balls $B_p(r)$ are precompact. Suppose that it satisfies the $N$-covering property for some $N>1$. Then for any finite non-atomic measure $\mu$ on $(X,d)$ and any positive integer $k$ there exists a collection of $k$ disjoint annuli $\{2A_i\}$ such that
\begin{equation}
\label{ds1:ineq}
\mu(A_i)\geqslant c\mu(X)/k\qquad\text{for any }1\leqslant i\leqslant k,
\end{equation}
where $c$ is a positive constant that depends on $N$ only. 
\end{prop}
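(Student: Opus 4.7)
The proposition is stated as a reformulation of Corollary~3.2 in~\cite{GNY}, and my plan is to follow the hierarchical extraction scheme originated by Korevaar~\cite{Kor} and developed in~\cite{GNY}. The idea is to build a tree of balls refining $X$ using the $N$-covering property, and at each node apply a mass dichotomy that either extracts a true annulus centered at that node or zooms into a smaller sub-ball carrying most of the mass.

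First I would reduce to the case where $\mu$ is supported on a fixed precompact ball $B_{p_0}(R_0)$, at the cost of an arbitrarily small fraction of $\mu(X)$ that is absorbed in the final constant $c(N)$. Starting from this root, at each node $B_p(r)$ I apply the \emph{same-center shrinking dichotomy}: fix a threshold $\theta=\theta(N)\in(0,1)$, and either the annular shell $\{r/2\leqslant d(\cdot,p)<r\}$ carries at least $\theta\mu(B_p(r))$, in which case I record it as a candidate annulus $A$; or the concentric half-ball $B_p(r/2)$ carries at least $(1-\theta)\mu(B_p(r))$, in which case I zoom in. To avoid producing only nested candidates from a single center, I use the $N$-covering property to branch: at selected scales I cover $B_p(r)$ by $N$ sub-balls of radius $r/2$ and continue the recursion in each heavy sub-ball independently, thereby building a tree of candidate annuli whose branching is controlled by $N$.

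The main obstacle is ensuring the disjointness of the \emph{doubled} annuli $2A_i$. I would address this with a two-level separation argument. Same-branch annuli are same-center and at scales differing by factors of $2$; by keeping only every $\ell$-th scale for a large enough $\ell=\ell(N)$, the doublings $\{r/4\leqslant d(\cdot,p)<2r\}$ at consecutive retained scales stay nested but disjoint. Cross-branch annuli come from sub-balls arising at a common node via the $N$-covering; their centers lie in disjoint children, and by further shrinking each candidate's outer radius slightly below the scale $r/2$ of the children, the doublings remain confined to each child and cannot interact. The $N$-covering bounds both the branching of the tree and the number of cross-branch candidates per node, ensuring that the total yield at every scale is controlled.

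Finally, one stops the recursion as soon as $\mu(B_p(r))$ drops below a cut-off of order $\mu(X)/k$; by atomlessness and precompactness this happens at finite depth, and either the same-center shrinking produced $\theta$-fraction annuli at enough levels, or the branching produced enough disjoint sub-branches, giving in total $k$ annuli each of mass $\geqslant c(N)\mu(X)/k$. The delicate point is the joint calibration of $\theta$, the skip parameter $\ell$, and the cut-off threshold against $N$; in~\cite{GNY} this is carried out by an induction on $k$, and my plan would import the same inductive bookkeeping to conclude with $c$ depending only on $N$.
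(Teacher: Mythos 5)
The paper does not give a proof of Proposition~\ref{ds1}; it is quoted verbatim as a reformulation of~\cite[Corollary~3.2]{GNY}, with only a remark on how to extract the explicit value $c^{-1}=8N^{12}$ from~\cite[Lemma~3.4]{GNY}. So there is no in-paper argument to compare against, and your proposal is in effect a sketch of the GNY argument itself, not an independent proof.

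As a sketch it has the right flavour (Korevaar-style hierarchical extraction), but the hard parts are either deferred or mis-stated. Your ``same-center shrinking dichotomy'' (shell vs.\ concentric half-ball) is not how~\cite{GNY} proceeds: their induction on $k$ selects, at each step, a ball near-extremal for the measure among balls of a fixed radius, then either declares the surrounding annulus or removes a controlled neighbourhood and recurses on the leftover measure; there is no single-center zooming tree. More importantly, your two claims that would make the tree scheme work are the ones most likely to fail as stated. First, the cross-branch disjointness: the $N$ covering balls of $B_p(r)$ by radius-$r/2$ balls need not be disjoint, so candidates produced in two ``children'' can have overlapping supports, and ``shrinking each candidate's outer radius slightly below $r/2$'' does not separate their doublings if the children themselves overlap -- you would need a maximal \emph{disjoint} family plus the covering property, not the covering property alone. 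Second, the terminal accounting ``either the same-center shrinking produced $\theta$-fraction annuli at enough levels, or the branching produced enough disjoint sub-branches'' is not a genuine dichotomy and does not visibly deliver $k$ annuli each of mass $\geqslant c\mu(X)/k$: a measure concentrated near a single point will make your same-center branch very deep while giving only one usable shell at the stopping threshold, and nothing in the sketch forces the tree to have $\geqslant k$ recorded leaves with the required mass. These are exactly the places where~\cite{GNY} invests its careful inductive bookkeeping, and your plan ``imports'' that bookkeeping without doing it, so the proposal as written is a pointer to the reference rather than a proof.
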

The hypotheses in the proposition above are rather delicate and the conclusion can not be easily improved. For example, the power of the integer $k$ in inequality~\eqref{ds1:ineq} is optimal. The value of the constant $c$ in Proposition~\ref{ds1} can be chosen explicitly. In more detail, we can suppose that it is given by the relation $c^{-1}=8N^{12}$. This observation follows by examining the main argument in~\cite[Section 3]{GNY}, see the proof of~\cite[Lemma~3.4]{GNY}.

Proposition~\ref{ds1} and its ramifications have been used to obtain eigenvalue upper bounds for a number of eigenvalue problems under various hypotheses, see~\cite{GNY,Ko14,AH,Ko13,HaKo} and references therein. However, all these results are concerned with conformal eigenvalue bounds. The method that we use for a proof of Theorem~\ref{t1} is similar in the spirit to the argument in~\cite{Ko17}, and relies on a new construction of test-functions intimately linked to the geometry of $\mathbb CP^m$. We describe this construction below.

\subsection{Construction of test-functions}
We start with constructing auxiliary Lipschitz functions supported in metric balls and their complements. Our functions are modelled on the function $\varphi_{[W]}$,
$$
\varphi_{[W]}([Z])=\frac{\abs{\langle Z,W\rangle}^2}{\abs{Z}^2\abs{W}^2},\qquad\text{where~ }[Z]\in\mathbb CP^m,
$$
 and the construction uses the properties of the family of biholomorphisms $\theta_{t,[W]}$ described in Section~\ref{prem}. Recall that by relation~\eqref{rel:key1}, we have
\begin{equation}
\label{rel:key1n}
\varphi_{[W]}([Z])=\cos^2 (\dist_{FS}([Z],[W]))
\end{equation}
for any $[Z]$, $[W]\in\mathbb CP^m$. Thus, the restriction of the function $\varphi_{[W]}-(1/2)$ to the ball $B_{[W]}(\pi/4)$ gives a positive smooth function that vanishes on the boundary of the ball. For a given $R\in (0,\pi/4)$ we choose the value $t=t(R)>0$ such that $\theta_{t,[W]}$ maps the ball $B_{[W]}(2R)$ onto the ball $B_{[W]}(\pi/4)$; by Lemma~\ref{t1:l2} such a value $t$ exists and is unique. We define a function $\psi_{R,[W]}$ on the projective space $\mathbb CP^m$ by setting
\begin{equation}
\label{t1:f1}
\psi_{R,[W]}([Z])=\left\{
\begin{array}{lcc}
\varphi_{[W]}(\theta_{t,[W]}([Z]))-\displaystyle{\frac{1}{2}}, & \text{ if } & [Z]\in B_{[W]}(2R),\\
0, & \text{ if } & [Z]\notin B_{[W]}(2R).
\end{array}
\right.
\end{equation}
Clearly, it is a non-negative Lipschitz function, which is supported in the metric ball $B_{[W]}(2R)$ and is not greater than $(1/2)$ everywhere. The following auxiliary lemma says that it is bounded below away from zero on the smaller ball $B_{[W]}(R)$.
\begin{lemma}
\label{t1:l3}
For any $R\in(0,\pi/4)$ and any point $[W]\in\mathbb CP^m$ the function $\psi_{R,[W]}$ defined by relation~\eqref{t1:f1} satisfies the inequality
\begin{equation}
\label{t1:f1:key}
\psi_{R,[W]}([Z])\geqslant\frac{3}{10}\qquad\text{for any}\quad [Z]\in B_{[W]}(R).
\end{equation}
\end{lemma}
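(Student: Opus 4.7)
\medskip
\noindent
\textbf{Proof plan.} The strategy is to unwind the definition of $\psi_{R,[W]}$ using the already-established properties of the family $\theta_{t,[W]}$, reducing the claim to an elementary trigonometric estimate on the Fubini--Study distance.

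First I would determine the explicit value of the dilation parameter $t=t(R)$. By Lemma~\ref{t1:l2}, the biholomorphism $\theta_{t,[W]}$ sends $B_{[W]}(2R)$ onto $B_{[W]}(\pi/4)$ exactly when $t\tan(2R)=\tan(\pi/4)=1$, so $t=\cot(2R)$. Applying Lemma~\ref{t1:l2} once more to the smaller ball, $\theta_{t,[W]}$ maps $B_{[W]}(R)$ onto a concentric ball $B_{[W]}(\rho)$ whose radius $\rho\in(0,\pi/4)$ satisfies
\begin{equation*}
\tan\rho = t\tan R = \frac{\tan R}{\tan(2R)} = \frac{1-\tan^2 R}{2},
\end{equation*}
where I used the double-angle identity $\tan(2R)=2\tan R/(1-\tan^2 R)$. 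Since $R\in(0,\pi/4)$ gives $\tan R\in(0,1)$, this yields $\tan\rho<1/2$, hence $\tan^2\rho<1/4$.

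Next, for any $[Z]\in B_{[W]}(R)$ the image $[Z']:=\theta_{t,[W]}([Z])$ lies in $B_{[W]}(\rho)$, and therefore by relation~\eqref{rel:key1},
\begin{equation*}
\varphi_{[W]}([Z'])=\cos^2\bigl(\dist_{FS}([Z'],[W])\bigr)>\cos^2\rho=\frac{1}{1+\tan^2\rho}>\frac{1}{1+1/4}=\frac{4}{5}.
\end{equation*}
Substituting into the definition~\eqref{t1:f1}, I then conclude
\begin{equation*}
\psi_{R,[W]}([Z])=\varphi_{[W]}([Z'])-\frac{1}{2}>\frac{4}{5}-\frac{1}{2}=\frac{3}{10},
\end{equation*}
which is the required inequality.

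There is essentially no obstacle here: once one notices that $\theta_{t,[W]}$ preserves level sets of $\varphi_{[W]}$ (as guaranteed by Lemma~\ref{t1:l1}) and uses the quantitative radius-transformation rule of Lemma~\ref{t1:l2}, the claim reduces to the trigonometric inequality $(1-\tan^2 R)/2 < 1/2$ on $(0,\pi/4)$. The only point worth being careful about is the direction of the monotonicity: since $t=\cot(2R)>1$ when $R<\pi/4$, the ball $B_{[W]}(R)$ is dilated outward, and yet the image radius $\rho$ still satisfies $\tan\rho<1/2$ because $\tan(2R)$ grows faster than $\tan R$ for $R$ near $\pi/4$.
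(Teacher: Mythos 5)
Your proof is correct and takes essentially the same route as the paper: determine $t=\cot(2R)$, use Lemma~\ref{t1:l2} to find the image radius $\rho$ with $\tan\rho = t\tan R$, observe that it lies strictly inside $B_{[W]}(\pi/4)$, and conclude $\varphi_{[W]}$ there exceeds $4/5$. Your use of the double-angle identity to get $\tan\rho=(1-\tan^2R)/2<1/2$ is a slightly slicker way to obtain the bound the paper derives by analysing the monotonicity of $\bigl(1+\tan^2R/\tan^2(2R)\bigr)^{-1}$ in $R$. One small inaccuracy in your closing remark: $t=\cot(2R)>1$ only when $R<\pi/8$, not for all $R<\pi/4$; for $R\in(\pi/8,\pi/4)$ the map $\theta_{t,[W]}$ actually shrinks $B_{[W]}(R)$. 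This does not affect the proof, since your explicit formula for $\tan\rho$ gives $\tan\rho<1/2$ regardless of whether $t$ exceeds $1$.
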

\begin{proof}
We follow the notation in the proof of Lemma~\ref{t1:l1}. First, by the equivariance properties it is sufficient to prove the lemma for the case when $[W]=[1:0:\ldots:0]$. Second, note that by Lemma~\ref{t1:l1} together with relation~\eqref{rel:key1n} the restriction of the function $\psi_{R,[W]}$ to the ball $B_{[W]}(R)$ achieves minimum on the boundary of the ball. Thus, for a proof of the lemma it is sufficient to show that inequality~\eqref{t1:f1:key} holds for any point $[Z]$ that belongs to the boundary of the ball $B_{[W]}(R)$. Let $B_{[W]}(\rho)$ be the image of $B_{[W]}(R)$ under the biholomorphism $\theta_{t,[W]}$; by Lemma~\ref{t1:l2} we have $\tan\rho=t\tan R$.

Following the argument in the proof of Lemma~\ref{t1:l1}, we reduce the considerations to the coordinate chart $U_0$, formed by the points $[Z]=[z_0:z_1:\ldots:z_m]$ such that $z_0\ne 0$. In this chart the ball $B_{[W]}(\rho)$ is represented by the Euclidean ball centred at the origin of radius $\tan\rho$, see relation~\eqref{l2:eq}. Thus, writing down the function $\varphi_{[W]}$ in these coordinates, for any $[Z]\in\partial B_{[W]}(R)$ we obtain
\begin{equation}
\label{l3:eq}
\varphi_{[W]}(\theta_{t,[W]}([Z]))=\frac{1}{1+\tan^2\rho}=\frac{1}{1+t^2\tan^2R}.
\end{equation}
Recall that the value $t=t(R)>0$ above is chosen such that the bihilomorphism $\theta_{t,[W]}$ maps the metric ball $B_{[W]}(2R)$ onto the ball $B_{[W]}(\pi/4)$; by Lemma~\ref{t1:l2} it equals $\tan^{-1}(2R)$. Thus, relation~\eqref{l3:eq} takes the form 
$$
\varphi_{[W]}(\theta_{t,[W]}([Z]))=\left(1+\frac{\tan^2R}{\tan^2(2R)}\right)^{-1}
$$ 
for any $[Z]\in\partial B_{[W]}(R)$ and any $R\in(0,\pi/4)$. It is straightforward to see that the right hand-side in the relation above, as a function of $R$, achieves its minimum $(4/5)$ when $R=0$. Thus, we conclude that
$$
\psi_{R,[W]}([Z])\geqslant\frac{4}{5}-\frac{1}{2}=\frac{3}{10}
$$
for any $[Z]\in B_{[W]}(R)$.
\end{proof}

Now we define a second auxiliary function supported in the complement of a given metric ball $B_{[W]}(r/2)$ in $\mathbb CP^m$. For a given $r\in (0,\pi/2)$ we choose the value $t=t(r)>0$ such that the biholomorphism $\theta_{t,[W]}$ maps the metric ball $B_{[W]}(r/2)$ in the Fubini-Study metric onto the ball $B_{[W]}(\pi/4)$. We define a function $\bar\psi_{r,[W]}$ on the projective space $\mathbb CP^m$ by setting
\begin{equation}
\label{t1:f2}
\bar\psi_{r,[W]}([Z])=\left\{
\begin{array}{lcc}
0, & \text{ if } & [Z]\in B_{[W]}(r/2),\\
(\varphi_{[W]}(\theta_{t,[W]}([Z]))+1)^{-1}-\displaystyle{\frac{2}{3}}, & \text{ if } & [Z]\notin B_{[W]}(r/2).
\end{array}
\right.
\end{equation}
Clearly, it is a non-negative Lipschitz function, which is supported in the complement of the metric ball $B_{[W]}(r/2)$ and is not greater than $(1/3)$ everywhere. The following statement is a version of Lemma~\ref{t1:l3} for the function $\bar\psi_{r,[W]}$.

\begin{lemma}
\label{t1:l4}
For any $r\in(0,\pi/2)$ and any point $[W]\in\mathbb CP^m$ the function $\bar\psi_{r,[W]}$ defined by relation~\eqref{t1:f2} satisfies the inequality
\begin{equation}
\label{t1:f2:key}
\bar\psi_{r,[W]}([Z])\geqslant\frac{1}{6}\qquad\text{for any}\quad [Z]\notin B_{[W]}(r).
\end{equation}
\end{lemma}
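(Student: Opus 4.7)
The plan is to follow the same three-step pattern used in the proof of Lemma~\ref{t1:l3}: reduce to a normalised point by equivariance, transport everything into the affine chart $U_0$ via Lemma~\ref{t1:l1}, and then reduce the estimate to a one-variable inequality in $r$ that follows from the double angle formula for $\tan$.

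First, by the equivariance $\bar\psi_{r,[CW]}([CZ])=\bar\psi_{r,[W]}([Z])$ for $C\in\SU_{m+1}$, which follows from the analogous equivariance of $\varphi_{[W]}$ and $\theta_{t,[W]}$ recorded in Lemma~\ref{t1:l1}, it suffices to treat the case $[W]=[1:0:\ldots:0]$. The condition that $\theta_{t,[W]}$ sends $B_{[W]}(r/2)$ onto $B_{[W]}(\pi/4)$ fixes $t=t(r)$, and by Lemma~\ref{t1:l2} this value is $t=\cot(r/2)$, that is $t\tan(r/2)=1$. Next, I observe that the desired inequality $\bar\psi_{r,[W]}([Z])\geqslant 1/6$ is equivalent to
$$
\varphi_{[W]}(\theta_{t,[W]}([Z]))\leqslant \frac{1}{5},
$$
so the task reduces to an upper bound on $\varphi_{[W]}\circ\theta_{t,[W]}$ outside the ball $B_{[W]}(r)$.

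The key monotonicity is that $\varphi_{[W]}([Z])=\cos^2\dist_{FS}([Z],[W])$ is a strictly decreasing function of the Fubini-Study distance to $[W]$; combined with Lemma~\ref{t1:l2}, which says that $\theta_{t,[W]}$ maps concentric Fubini-Study balls to concentric Fubini-Study balls, this implies that $\varphi_{[W]}\circ\theta_{t,[W]}$ is also a decreasing function of $\dist_{FS}(\cdot,[W])$. Hence its supremum over the complement $\mathbb{CP}^m\setminus B_{[W]}(r)$ is attained on the sphere $\partial B_{[W]}(r)$. On this sphere, working in the chart $U_0$ with coordinates $\zeta$ as in the proof of Lemma~\ref{t1:l1}, relation~\eqref{l2:eq} gives $|\zeta|^2=\tan^2 r$, and since $\theta_{t,[W]}$ acts as the dilation $\zeta\mapsto t\zeta$ and $\varphi_{[W]}$ takes the form~\eqref{l1:func}, we obtain
$$
\varphi_{[W]}(\theta_{t,[W]}([Z]))=\frac{1}{1+t^2\tan^2 r}\qquad\text{for any }[Z]\in\partial B_{[W]}(r).
$$
(Points on the cut locus $\Cut_{[W]}$ lie outside $B_{[W]}(r)$ and are fixed by $\theta_{t,[W]}$ up to being mapped to the cut locus; there $\varphi_{[W]}\circ\theta_{t,[W]}=0$, so the inequality is trivial.)

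It remains to estimate $t^2\tan^2 r$ from below. Plugging $t=\cot(r/2)$ into the double angle formula $\tan r=2\tan(r/2)/(1-\tan^2(r/2))$ yields
$$
t\tan r=\frac{\tan r}{\tan(r/2)}=\frac{2}{1-\tan^2(r/2)}.
$$
For $r\in(0,\pi/2)$ we have $\tan^2(r/2)\in(0,1)$, so $1-\tan^2(r/2)\in(0,1)$ and hence $t^2\tan^2 r\geqslant 4$. Consequently $\varphi_{[W]}(\theta_{t,[W]}([Z]))\leqslant 1/(1+4)=1/5$ on $\partial B_{[W]}(r)$, and therefore on all of $\mathbb{CP}^m\setminus B_{[W]}(r)$. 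Substituting into the definition~\eqref{t1:f2} of $\bar\psi_{r,[W]}$ gives
$$
\bar\psi_{r,[W]}([Z])\geqslant\frac{1}{1/5+1}-\frac{2}{3}=\frac{5}{6}-\frac{2}{3}=\frac{1}{6},
$$
as required. The main (mild) subtlety is only tracking that the extremum lies on $\partial B_{[W]}(r)$ rather than at $r\to 0$, but the double angle identity makes the final numerical inequality transparent.
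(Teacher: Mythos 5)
Your proof is correct and follows essentially the same route as the paper's: reduce by equivariance to $[W]=[1:0:\ldots:0]$, observe that $\varphi_{[W]}\circ\theta_{t,[W]}$ is a decreasing function of $\dist_{FS}(\cdot,[W])$ so the extremum over the complement of $B_{[W]}(r)$ sits on $\partial B_{[W]}(r)$, and evaluate there in the affine chart to get $\bigl(1+\tan^2 r/\tan^2(r/2)\bigr)^{-1}\leqslant 1/5$. The only cosmetic difference is that you make the final numerical bound explicit via the double-angle identity $\tan r/\tan(r/2)=2/(1-\tan^2(r/2))$, where the paper just asserts the one-variable computation.
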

\begin{proof}
Let $t=t(r)>0$ be a real number such that the biholomorphism $\theta_{t,[W]}$ maps the ball $B_{[W]}(r/2)$ onto the ball $B_{[W]}(\pi/4)$; by Lemma~\ref{t1:l2} it equals $\tan^{-1}(r/2)$. By Lemma~\ref{t1:l1} the restriction of the function $\varphi_{[W]}(\theta_{t,[W]}([Z])$ to the complement of the ball $B_{[W]}(r)$ achieves its maximum on the boundary. Hence, the restriction of the function $\bar\psi_{r,[W]}$ to the complement of the ball $B_{[W]}(r)$ achieves its minimum on the boundary $\partial B_{[W]}(r)$, and for a proof of the lemma it is sufficient to show that inequality~\eqref{t1:f2:key} holds for any $[Z]\in\partial B_{[W]}(r)$. Following the line of argument in the proof of Lemma~\ref{t1:l3}, we obtain
$$
\varphi_{[W]}(\theta_{t,[W]}([Z]))=\left(1+\frac{\tan^2r}{\tan^2(r/2)}\right)^{-1}
$$ 
for any $[Z]\in\partial B_{[W]}(r)$ and any $r\in(0,\pi/2)$. An elementary computation shows that the right hand-side as a function of $r$ achieves its maximum $(1/5)$ when $r=0$. Thus, we conclude that
$$
\bar\psi_{r,[W]}([Z])\geqslant\frac{5}{6}-\frac{2}{3}=\frac{1}{6}
$$
for any $[Z]\notin B_{[W]}(r)$.
\end{proof}

Now consider annuli $A$ and $2A$ in $\mathbb CP^m$, which are complements of concentric metric balls, 
$$
A=B_{[W]}(R)\backslash B_{[W]}(r)\quad\text{and}\quad   2A=B_{[W]}(2R)\backslash B_{[W]}(r/2),
$$ 
where $0\leqslant r<R<\pi/4$ and $[W]\in\mathbb CP^m$. We define a function $u_A$ on $\mathbb CP^m$ by setting it to be the product $\psi_{R,[W]}\bar\psi_{r,[W]}$. Clearly, it is a Lipschitz function such that
$$
0\leqslant u_A\leqslant\frac{1}{6}\qquad\text{everywhere on~ }\mathbb CP^m.
$$
Besides, it is supported in the annulus $2A$, and is bounded away from zero on $A$, 
$$
u_A([Z])\geqslant\frac{1}{20}\qquad\text{ for any~ }[Z]\in A.
$$
We use the pull-backs of such functions as test-functions for the Rayleigh quotient to complete the proof of Theorem~\ref{t1} below.

\subsection{The estimate for the Laplace eigenvalues}
Now we prove Theorem~\ref{t1}. Recall that for any admissible test-function $u$ on $\Sigma^n$ the Rayleigh quotient $\mathcal R(u)$ is defined by the relation
$$
\mathcal R(u)=\left(\int_{\Sigma^n}\abs{\nabla u}^2d\mathit{Vol}_g\right)/\left(\int_{\Sigma^n}u^2d\mathit{Vol}_g\right).
$$
By the variational principle, see~\cite{Cha}, for a proof of the theorem it is sufficient for any $k\geqslant 1$  to construct a collection of $W^{1,2}$-orthogonal  $k+1$ Lipschitz test-functions $u_i$ such that
\begin{equation}
\label{t1:punch}
\mathcal R(u_i)\leqslant C(n,m)d([\phi],[\omega_g])k,
\end{equation}
where $d([\phi],[\omega_g])$ is the holomorphic  degree defined by the relation
$$
d([\phi],[\omega_g])=\left(\int_{\Sigma^n}\phi^*(\omega_{FS})\wedge\omega_g^{n-1}\right)/\left(\int_{\Sigma^n}\omega_g^n\right).
$$
We regard the complex projective space $\mathbb CP^m$ as a metric space with the distance function $\dist_{FS}$ induced by the Fubini-Study metric. By Example~\ref{nnrc} we conclude that the metric space $(\mathbb CP^m,\dist_{FS})$ satisfies the $N$-covering property with $N=9^{2m}$. We endow $(\mathbb CP^m,\dist_{FS})$ with a measure $\mu$ obtained as the push-forward of the volume measure $\mathit{Vol}_g$ on $\Sigma^n$ under a given holomorphic map $\phi:\Sigma^n\to\mathbb CP^m$. As is known~\cite{GH}, the pre-image $\phi^{-1}([W])$ of any point $[W]\in\mathbb CP^m$ is an analytic subvariety of $\Sigma^n$, which could be empty. Moreover, since $\phi$ is non-constant, its codimension is positive, and hence, the push-forward measure $\mu$ is non-atomic. Thus, Proposition~\ref{ds1} applies and we can find a collection $\{A_i\}$ of $k+1$ annuli on $\mathbb CP^m$ such that
\begin{equation}
\label{denom1}
\mu(A_i)\geqslant c\mu(\mathbb CP^m)/(k+1)\geqslant \mu(\mathbb CP^m)/2k\qquad\text{for any }i=1,\ldots,k,
\end{equation}
where the constant $c$ depends only on $m$, and the annuli $\{2A_i\}$ are disjoint.

We denote by $u_i$ the Lipschitz test-functions $u_{A_i}\circ\phi$, where $u_{A_i}$ are constructed above. In more detail, let $[W_i]$, $r_i$, and $R_i$ be the centre, the inner radius and the outer radius of $A_i$ respectively. Denote by $\psi_i$ the functions $\psi_{R_i,[W_i]}$, and by $\bar\psi_i$ the function $\bar\psi_{r_i,[W_i]}$, see the construction above. Then the function 
$$
u_i=\left\{
\begin{array}{cc}
(\psi_i\bar\psi_i)\circ\phi, &\text{ if~~ }r_i>0,\\
\psi_i\circ\phi, & \text{ if~~ }r_i=0,
\end{array}
\right.
$$ 
can be used as a test-function for the Rayleight quotient on $\Sigma^n$. Since the $u_i$'s are supported in the disjoint sets $\phi^{-1}(2A_i)$, they are $W^{1,2}$-orthogonal, and it is sufficient to prove inequality~\eqref{t1:punch} for all $u_i$, where $i=1,\ldots,k+1$. 

To prove inequality~\eqref{t1:punch} for each $u_i$, we first estimate the numerator in the Rayleigh quotient. Below we assume that $r_i>0$; the case $r_i=0$ can be treated similarly. Since the functions $\psi_i$ and $\bar\psi_i$ are not greater than $1$, we obtain
\begin{equation}
\label{t1:num}
\int_{\Sigma^n}\abs{\nabla u_i}^2d\mathit{Vol}_g\leqslant 2\left(\int_{\phi^{-1}(2A_i)}\abs{\nabla(\psi_i\circ\phi)}^2d\mathit{Vol}_g+\int_{\phi^{-1}(2A_i)}\abs{\nabla(\bar\psi_i\circ\phi)}^2d\mathit{Vol}_g\right).
\end{equation}
Now we claim that the first integral above satisfies the following inequalities
\begin{multline*}
\int_{\phi^{-1}(2A_i)}\abs{\nabla(\psi_i\circ\phi)}^2d\mathit{Vol}_g\leqslant
\int_{\Sigma^n}\abs{\nabla(\psi_i\circ\phi)}^2d\mathit{Vol}_g\leqslant \int_{\Sigma^n}\abs{\nabla(\varphi_{[W_i]}\circ(\theta_{t_i,[W_i]}\circ\phi))}^2d\mathit{Vol}_g\\ \leqslant\int_{\Sigma^n}\abs{\nabla(\tau\circ\theta_{t_i,[W_i]}\circ\phi)}^2d\mathit{Vol}_g,
\end{multline*}
where $\tau$ is the moment map for the action of $\SU_{m+1}$ on the projective space $\mathbb CP^m$. The first relation above is trivial, and the second is the consequence of the definition of the function $\psi_i$, see formula~\eqref{t1:f1}. To explain the last inequality note that, by the equivariance properties,  we may assume that the point $[W_i]$ is $[1:0:\ldots:0]$. Then the function $i\varphi_{[W_i]}$ coincides with the $(1,1)$-entry of the $\mathfrak{su}_{m+1}$-matrix $\tau$, see relation~\eqref{mm:f}, and the inequality follows. Now by Lemma~\ref{denom:top}, we obtain
\begin{multline}
\label{t1:aux}
\int_{\phi^{-1}(2A_i)}\abs{\nabla(\psi_i\circ\phi)}^2d\mathit{Vol}_g\leqslant\int_{\Sigma^n}\abs{\nabla(\tau\circ\theta_{t_i,[W_i]}\circ\phi)}^2d\mathit{Vol}_g= \\ \frac{1}{(n-1)!}\int_{\Sigma^n}(\theta_{t_i,[W_i]}\circ\phi)^*(\omega_{FS})\wedge\omega_g^{n-1} =\frac{1}{(n-1)!}\int_{\Sigma^n}\phi^*(\omega_{FS})\wedge\omega_g^{n-1},
\end{multline}
where in the first equality we used the fact that the volume form on $\Sigma^n$ equals $(\omega_g^n/n!)$, and in the second -- the fact that the pull-back form $\theta_{t_i,[W_i]}^*\omega_{FS}$ is cohomologous to $\omega_{FS}$, see the discussion in Section~\ref{prem}. The second integral in inequality~\eqref{t1:num} can be estimated in a similar fashion. In more detail, by the definition of the function $\bar\psi_i$, see formula~\eqref{t1:f2}, we get
\begin{multline*}
\int_{\phi^{-1}(2A_i)}\abs{\nabla(\bar\psi_i\circ\phi)}^2d\mathit{Vol}_g\leqslant\int_{\Sigma^n}(1+(\varphi_{[W]}\circ(\theta_{\bar t_i,[W_i]}\circ\phi))^{-4}\abs{\nabla(\varphi_{[W_i]}\circ(\theta_{\bar t_i,[W_i]}\circ\phi))}^2d\mathit{Vol}_g\\
\leqslant \int_{\Sigma^n}\abs{\nabla(\varphi_{[W_i]}\circ(\theta_{\bar t_i,[W_i]}\circ\phi))}^2d\mathit{Vol}_g,
\end{multline*}
where in the last inequality we used the fact that the function $\varphi_{[W]}$ is non-negative. Following the line of the argument above, we arrive at the inequality
$$
\int_{\phi^{-1}(2A_i)}\abs{\nabla(\bar\psi_i\circ\phi)}^2d\mathit{Vol}_g\leqslant \frac{1}{(n-1)!}\int_{\Sigma^n}\phi^*(\omega_{FS})\wedge\omega_g^{n-1}.
$$
Combining these estimates for the integrals in the right hand-side of~\eqref{t1:num}, we obtain the following estimate for the Dirichlet integral of $u_i$:
\begin{equation}
\label{t1:num2}
\int_{\Sigma^n}\abs{\nabla u_i}^2d\mathit{Vol}_g\leqslant \frac{4}{(n-1)!}\int_{\Sigma^n}\phi^*(\omega_{FS})\wedge\omega_g^{n-1}.
\end{equation}
Using Lemmas~\ref{t1:l3} and~\ref{t1:l4} together with relation~\eqref{denom1}, we can also estimate the denominator of the Rayleigh quotient:
$$
\int_{\Sigma^n}u_i^2d\mathit{Vol}_g\geqslant \frac{1}{400}\mathit{Vol}_g(\phi^{-1}(A_i))=\frac{1}{400}\mu(A_i)\geqslant\frac{1}{800}c\mu(\mathbb CP^m)/k,
$$
where the constant $c$ depends only on $m$. Recall that the measure $\mu$ above is the push-forward of the volume measure on $\Sigma^n$, and hence, the last inequality gives
\begin{equation}
\label{denom3}
\int_{\Sigma^n}u_i^2d\mathit{Vol}_g\geqslant\frac{1}{800}c\mathit{Vol}_g(\Sigma^n)/k=\frac{c}{800n!}\frac{1}{k}\int_{\Sigma^n}\omega_g^n.
\end{equation}
Combining relations~\eqref{t1:num2} and~\eqref{denom3} we immediately arrive at inequality~\eqref{t1:punch}.
\qed

\section{Laplace eigenvalues of analytic subvarieties}
\label{variety}

\subsection{Laplace operator on analytic subvarieties}
Let $M^{n+l}$ be a complex manifold of dimension $(n+l)$. Recall that an analytic subvariety $\Sigma\subset M^{n+l}$ is a closed subset that is given locally as the zero set of a finite collection of holomorphic functions. A point $p\in\Sigma$ is called regular, if it has a neighbourhood $U$ in $M^{n+l}$ such that $U\cap\Sigma\subset U$ is a complex submanifold. The collection of all regular points is called the {\em regular locus} of $\Sigma$, and is denoted by $\Sigma_*$. An analytic subvariety is called {\em irreducible} if $\Sigma_*$ is connected. Throughout the rest of the section we suppose that $\Sigma^n\subset M^{n+l}$ is an irreducible analytic subvariety whose regular locus has complex dimension $n$. The complement $\Sigma^n\backslash\Sigma_*^n$ is called the {\em singular set} of $\Sigma^n$; it is also a subvariety, but of greater codimension, see~\cite{GH} for details.

For a K\"ahler metric $g$ on $M^{n+l}$ we denote by $g_\Sigma$ the induced metric on a regular locus $\Sigma_*^n$. Let $\Delta_\Sigma$ be the Laplace-Beltrami operator on $\Sigma_*^n$ with respect to the metric $g_\Sigma$. We view $\Delta_\Sigma$ as an operator defined on the set $\mathcal D(\Delta_\Sigma)\subset L^2(\Sigma^n)$ that is formed by compactly supported $C^2$-smooth functions on $\Sigma^n$. The following statement is a version of the result in~\cite{LT95} for algebraic subvarietes. We outline its proof below. Unlike the argument in~\cite{LT95}, we use elliptic regularity theory instead of the integral representaion of the resolvent via the heat kernel.

\begin{prop}
Let $M^{n+l}$ be a closed complex manifold, and $\Sigma^n$ an irreducible analytic subvariety. Then for any K\"ahler metric $g$ on $M^{n+l}$ the Laplace-Beltrami operator $\Delta_\Sigma$ on $\Sigma^n$ is essentially self-adjoint and has discrete spectrum.
\end{prop}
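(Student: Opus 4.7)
I would follow the Friedrichs extension approach. Let $\hat\Delta$ denote the self-adjoint operator associated to the closure of the symmetric form $Q(u,v) = \int_{\Sigma^n_*}\langle\nabla u,\nabla v\rangle\,d\mathit{Vol}_{g_\Sigma}$ initially defined on $C^\infty_c(\Sigma^n_*)$. It then suffices to establish: (i) $\mathcal D(\hat\Delta)$ contains the natural domain $\mathcal D(\Delta_\Sigma)$, yielding essential self-adjointness, and (ii) the resolvent $(\hat\Delta + 1)^{-1}$ is compact, yielding discrete spectrum. Both rest on the fact that $S=\Sigma^n\setminus\Sigma^n_*$ is an analytic subvariety of complex codimension at least one in $\Sigma^n$, hence real codimension at least two. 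Using logarithmic cutoffs on dyadic $\varepsilon$-shells around $S$ together with the Wirtinger-type volume bound $\mathit{Vol}_{g_\Sigma}(U_\varepsilon(S)) = O(\varepsilon^2)$ inherited from compactness of $M^{n+l}$, I would construct Lipschitz functions $\eta_\varepsilon\colon \Sigma^n_* \to [0,1]$ vanishing on an $\varepsilon$-neighborhood of $S$, equal to $1$ off the $2\varepsilon$-neighborhood, and satisfying
\[
\int_{\Sigma^n_*}\abs{\nabla\eta_\varepsilon}^2\,d\mathit{Vol}_{g_\Sigma}\longrightarrow 0\qquad\text{as }\varepsilon\to 0.
\]

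For essential self-adjointness, given $u \in \mathcal D(\Delta_\Sigma)$, the truncations $u_\varepsilon = \eta_\varepsilon u$ converge to $u$ in $L^2$. Expanding
\[
\Delta_\Sigma(\eta_\varepsilon u) = \eta_\varepsilon\,\Delta_\Sigma u + 2\langle\nabla\eta_\varepsilon,\nabla u\rangle + u\,\Delta_\Sigma\eta_\varepsilon,
\]
Cauchy-Schwarz combined with the $2$-capacity bound above and the integrability conditions $u,\nabla u\in L^2$ (together with a localized Sobolev estimate on the thin shells to control the term $u\,\nabla\eta_\varepsilon$) shows that $u_\varepsilon\to u$ in the graph norm of $\Delta_\Sigma$. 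Since each $u_\varepsilon$ is compactly supported inside the smooth manifold $\Sigma^n_*$, interior mollification then produces $C^\infty_c$-approximants, so $u\in\mathcal D(\hat\Delta)$; the reverse inclusion is automatic, so $\hat\Delta = \Delta_\Sigma$ on the natural domain.

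For the discreteness of the spectrum, it suffices to show that the form domain $W^{1,2}_0(\Sigma^n_*)$ embeds compactly into $L^2(\Sigma^n_*)$. Given a bounded sequence $\{f_k\}\subset W^{1,2}_0$, decompose $f_k = \eta_\varepsilon f_k + (1-\eta_\varepsilon)f_k$. The first summand is supported in the compact subset $\Sigma^n\setminus U_\varepsilon(S)\subset \Sigma^n_*$, so the classical Rellich-Kondrachov theorem yields a subsequence convergent in $L^2$. The tail is controlled via the Michael-Simon Sobolev inequality on the calibrated, hence minimal, smooth locus $\Sigma^n_*\subset M^{n+l}$, together with H\"older: for some $p>1$ and $\alpha>0$,
\[
\int_{U_{2\varepsilon}(S)}f_k^2 \leqslant \mathit{Vol}(U_{2\varepsilon}(S))^{1-1/p}\norm{f_k}_{L^{2p}}^2 \leqslant C\varepsilon^\alpha\norm{f_k}_{W^{1,2}}^2,
\]
so the tail is $o(1)$ as $\varepsilon\to 0$, uniformly in $k$. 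A diagonal extraction then produces a Cauchy subsequence in $L^2(\Sigma^n_*)$.

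The main obstacle will be the Sobolev and Hardy-type estimates near the singular stratification of $S$, used both in step two (to control $u\,\nabla\eta_\varepsilon$ in $L^2$) and in step three (for the tail bound). The Michael-Simon inequality supplies the correct ingredient on the smooth locus, but its application at an arbitrary singular point of $\Sigma^n$ requires either an exhaustion argument using the cutoffs of step one and a careful limit, or a passage to a Hironaka resolution $\tilde\Sigma\to\Sigma^n$ and a transport of the elliptic estimates back to $\Sigma^n_*$. Either route avoids the heat-kernel machinery of~\cite{LT95} in favour of pure elliptic regularity, as indicated in the excerpt.
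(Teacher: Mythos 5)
Your overall architecture matches the paper's: a Friedrichs extension of the Dirichlet form, discreteness of spectrum via compactness of $W^{1,2}(\Sigma^n)\hookrightarrow L^2(\Sigma^n)$ ultimately resting on the Michael--Simon inequality, and essential self-adjointness obtained last. The routes diverge on the last point, and this is where your argument has a genuine gap. You aim to show $\mathcal D(\Delta_\Sigma)\subset\mathcal D(\hat\Delta)$ directly by truncating $u\in\mathcal D(\Delta_\Sigma)$ with logarithmic cutoffs $\eta_\varepsilon$ and proving convergence in graph norm. But the graph norm involves $\Delta_\Sigma(\eta_\varepsilon u)$, whose Leibniz expansion contains $u\,\Delta_\Sigma\eta_\varepsilon$ and $2\langle\nabla\eta_\varepsilon,\nabla u\rangle$, and neither is controlled by what you have. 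The capacity estimate gives $\norm{\nabla\eta_\varepsilon}_{L^2}\to 0$, but $\abs{\nabla\eta_\varepsilon}$ and, a fortiori, $\abs{\Delta_\Sigma\eta_\varepsilon}$ blow up pointwise on the shell; the claim $\norm{u\,\Delta_\Sigma\eta_\varepsilon}_{L^2}\to 0$ needs a weighted estimate or higher integrability of $u$ near $S$, which does \emph{not} follow from $u,\nabla u,\Delta_\Sigma u\in L^2$ alone. Your parenthetical ``localized Sobolev estimate on the thin shells'' is where a lemma is needed and not supplied. This is not a small technicality: real codimension $2$ is exactly the borderline where removable-singularity arguments for essential self-adjointness of the Laplacian require extra input, and the induced metric degenerating near $S$ compounds the difficulty.

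The paper avoids the cutoff calculus entirely by an indirect argument. It first obtains the integration-by-parts identity from the density theorem $W^{1,2}_0(\Sigma^n_*)=W^{1,2}(\Sigma^n)$ of Li--Tian, deduces symmetry of $\Delta_\Sigma$, takes a self-adjoint extension $\bar\Delta$, shows $\bar\Delta$ has compact resolvent (via the bounded map $L^2\to W^{1,2}$ from elliptic regularity and the compact embedding of Li--Tian, which is the Michael--Simon input), and \emph{then} concludes essential self-adjointness from Davies' criterion: a symmetric operator whose self-adjoint extension has a complete orthonormal basis of eigenfunctions lying, by elliptic regularity, in the original domain $\mathcal D(\Delta_\Sigma)$ has a unique self-adjoint extension. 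In effect, the paper proves discreteness first and reads off essential self-adjointness for free, so no graph-norm truncation is needed. Your compact-embedding step (interior Rellich plus a shell tail via Michael--Simon and H\"older) is close in spirit to the Li--Tian proof, but the intrinsic volume bound $\mathit{Vol}_{g_\Sigma}(U_\varepsilon(S))=O(\varepsilon^\alpha)$ and the cutoff capacity estimate are themselves nontrivial for singular varieties, since the ambient and intrinsic distances to $S$ are not comparable; the work you defer to ``a careful limit or a Hironaka resolution'' is substantial, and is precisely what the citation to Li--Tian absorbs in the paper's proof.
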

\begin{proof}
The standard Green formula yields the following relation
\begin{equation}
\label{p:r2}
\int_{\Sigma^n}(\Delta_\Sigma u) vd\mathit{Vol}_\Sigma+\int_{\Sigma^n}(du,dv)d\mathit{Vol}_\Sigma=0
\end{equation}
for any $u$, $v\in\mathcal D(\Delta_\Sigma)$. Hence, the operator $\Delta_\Sigma$ is symmetric and has a self-adjoint extension $\bar\Delta$ to an unbounded linear operator on $L^2(\Sigma^n)$, see~\cite[Lemma~1.2.8]{BD}. From~\eqref{p:r2}, we also deduce that
\begin{equation}
\label{p:r3}
\abs{u}_{1,2}^2\leqslant\abs{\langle\Delta_\Sigma u,u\rangle}+\abs{u}^2_2
\end{equation}
for any $u\in\mathcal D(\Delta_\Sigma)$, where $\abs{\,\cdot\,}_{1,2}$ and $\abs{\,\cdot\,}_2$ stand for the $W^{1,2}$-Sobolev norm and $L^2$-norm respectively, and $\langle\cdot,\cdot\rangle$ denotes the $L^2$-scalar product. Using relation~\eqref{p:r3} and elliptic regularity theory, it is straightforward to conclude that the resolvent of $\bar\Delta$ is a bounded linear operator $L^2(\Sigma^n)\to W^{1,2}(\Sigma^n)$. In more detail, let $v$ be a function of the form $(\bar\Delta-\lambda)u$, where $\lambda$ is a point from the resolvent set, and $u\in\mathcal D(\Delta_\Sigma)$. Then inequality~\eqref{p:r3} yields
\begin{equation}
\label{p:r4}
\abs{(\bar\Delta-\lambda)^{-1}v}_{1,2}^2\leqslant\abs{\langle v,(\bar\Delta-\lambda)^{-1}v\rangle}+(\abs{\lambda}+1)\abs{(\bar\Delta-\lambda)^{-1}v}^2_2.
\end{equation}
By elliptic regularity we see that inequality~\eqref{p:r4} holds for an arbitrary compactly supported smooth function $v$. Now, since the set of compactly supported smooth functions is dense among all $L^2$-integrable functions, it is straightforward to see that inequality~\eqref{p:r4} holds for any $L^2$-integrable function $v$.

Thus, the resolvent of $\bar\Delta$ is indeed a bounded linear operator $L^2(\Sigma^n)\to W^{1,2}(\Sigma^n)$. As is shown in the proof of~\cite[Theorem~5.3]{LT95}, the inclusion $W^{1,2}(\Sigma^n)\subset L^2(\Sigma^n)$ is compact, and we conclude that the resolvent of $\bar\Delta$ is also compact. Hence, the operator $\bar\Delta$ has  discrete spectrum. By elliptic regularity the eigenfunctions of $\bar\Delta$ are smooth, and by~\cite[Lemma~1.2.2]{BD} the self-adjoint extension is unique.
\end{proof}

In the argument above we used the statement from the proof of~\cite[Theorem~5.3]{LT95} that the inclusion $W^{1,2}(\Sigma^n)\subset L^2(\Sigma^n)$ is compact. This is the only place where the hypothesis that a metric $g$ on $M^{n+l}$ is K\"ahler is used. The main ingredient in the argument is a version of the Sobolev inequality. In more detail, the subvariety $\Sigma^n$ is a minimal current in $M^{n+l}$, and after an isometric embedding $M^{n+l}\to\mathbb R^m$, is a current of bounded mean curvature in the Euclidean space $\mathbb R^m$ to which the Michael and Simon version of the Sobolev inequality applies, see~\cite{MS73}.

Due to the choice of the domain $\mathcal D(\Delta_\Sigma)$, the self-adjoint extension of $\Delta_\Sigma$ is often referred to as the Dirichlet Laplacian on $\Sigma^n$. Note that in this context the Sobolev space $W^{1,2}_0(\Sigma^n_*)$, the closure in the Sobolev norm of compactly supported smooth functions, coincides with the Sobolev space $W^{1,2}(\Sigma^n)$, see the proof of~\cite[Theorem~4.1]{LT95}, and~\cite[Section~3]{Yosh97}. In particular, the domain $\mathcal D(\Delta_\Sigma)$ is dense in the Sobolev space $W^{1,2}(\Sigma^n)$. We use this observation below for the construction of test-functions for the Laplace eigenvalues on $\Sigma^n_*$.

\subsection{Proof of Theorem~\ref{t2}}
Throughout this section we denote the restriction to the regular locus $\Sigma^n_*$ of a holomorphic map $\phi:M^{n+l}\to\mathbb CP^m$ by the same symbol $\phi$; it is also a holomorphic map. First, by the discussion above the domain $\mathcal D(\Delta_\Sigma)$ is dense in the Sobolev space $W^{1,2}(\Sigma^n)$, and hence, relation~\eqref{p:r2} continues to hold when the function $v$ belongs to $W^{1,2}(\Sigma^n)$. With this observation the standard argument in~\cite{Cha} shows that the variational principle for the eigenvalues $\lambda_k(\Sigma^n,g_\Sigma)$ of the self-adjoint extension continues to hold. Thus, for a proof of the theorem it is sufficient for any integer $k\geqslant 1$ to construct a collection of $W^{1,2}$-orthogonal $k+1$ test-functions $u_i\in W^{1,2}(\Sigma^n)$ such that
$$
\mathcal R(u_i)\leqslant C(n,m)\left(\left(\int_{\Sigma^n}\phi^*(\omega_{FS})\wedge\omega_g^{n-1}\right)/\left(\int_{\Sigma^n}\omega_g^n\right)\right)k,
$$
where $C(n,m)$ is the constant that depends on $n$ and $m$ only. As in the proof of Theorem~\ref{t1}, as test-functions $u_i$ we use the functions  $u_{A_i}\circ\phi$, where the $u_{A_i}$'s are Lipschitz functions on $\mathbb CP^m$ constructed in Section~\ref{proof:t1}. As is known~\cite{GH}, the volume $\mathit{Vol}_g(\Sigma^n_*)$ is finite, and hence, such functions do belong to $W^{1,2}(\Sigma^n)$. 

Now we argue as in the proof of Theorem~\ref{t1}. Let $\mu$ be a measure on $\mathbb CP^m$ obtained by pushing forward the volume measure $\mathit{Vol}_g$ on $\Sigma^n_*$ under a holomorphic map $\phi:\Sigma^n_*\to\mathbb CP^m$. It is finite and has no atoms. The former follows from the fact that the volume $\mathit{Vol}_g(\Sigma^n_*)$ is finite, and the latter -- from the fact that the pre-image $\phi^{-1}([W])$ of any point $[W]\in\mathbb CP^m$ is an analytic subvariety of positive codimension, see~\cite{GH}. Thus, Proposition~\ref{ds1} applies, and we can find a collection $\{A_i\}$ of $k+1$ annuli such that the annuli $\{2A_i\}$ are pair-wise disjoint, and relation~\eqref{denom1} holds. Following the line of the argument in the proof of Theorem~\ref{t1}, we see that estimates~\eqref{t1:num2} and~\eqref{denom3} for the numerator and the denominator respectively in the Rayleigh quotient $\mathcal R(u_i)$, also carry over. In more detail, relation~\eqref{denom3} follows exactly in the same way, and the only point necessary to justify relation~\eqref{t1:num2} is the last equality in~\eqref{t1:aux}. The latter is a consequence of the Stokes formula for analytic varieties, see~\cite{GH}. \qed


{\small

}

\end{document}